\newtheorem{theorem}{Theorem}[section]
\newtheorem{lemma}[theorem]{Lemma}
\numberwithin{equation}{section}
\newenvironment{proof}[1][Proof]{\noindent\textbf{#1.} }{\hfill $\Box$}
 \makeatletter\setlength{\textwidth}{15.0cm}
\begin{document}
\title{{Blow-up Criteria for the Three Dimensional Nonlinear Dissipative System Modeling
Electro-hydrodynamics\footnote{E-mail addresses: jihzhao@163.com (J. Zhao), 124204602@qq.com
(M. Bai).}}}
\author{Jihong Zhao$^{\text{1}}$,
\ \ Meng Bai$^{\text{2}}$\\
[0.2cm] {\small $^{\text{1}}$ College of Science, Northwest A\&F
University,  Yangling, Shaanxi 712100, China}\\
[0.2cm] {\small $^{\text{2}}$  School of Mathematics and Statistics, Zhaoqing University, Zhaoqing, Guangdong 526061, China}}

\date{}
\maketitle

\begin{abstract}
In this paper, we investigate some sufficient conditions for the breakdown of local  smooth solutions to the three dimensional  nonlinear nonlocal dissipative system modeling
electro-hydrodynamics.  This model is a strongly coupled system by the well-known incompressible Navier-Stokes equations and the classical Poisson-Nernst-Planck equations. We show that the maximum of  the vorticity field alone controls the breakdown of smooth solutions, which reveals that the velocity field plays a more
dominant role than the density functions of charged particles in the blow-up theory of the system.
Moreover, some Prodi-Serrin type blow-up criteria are also established.

\textbf{Keywords}: Nonlinear dissipative system; electro-hydrodynamics; Navier-Stokes equations; Poisson-Nernst-Planck equations; blow-up criteria

\textbf{2010 AMS Subject Classification}: 35B44, 35K55, 35Q35
\end{abstract}

\section{Introduction}

In this paper, we study a well-established mathematical model for electro-diffusion, which is governed by transport, and Lorentz force coupling between the Navier-Stokes equations of an incompressible fluid and the transported Poisson-Nernst-Planck equations of a binary, diffuse charge system (cf. \cite{R90}). The three dimensional Cauchy problem reads as
\begin{equation}\label{eq1.1}
\begin{cases}
  \partial_{t} u+(u\cdot\nabla) u-\mu\Delta
  u+\nabla \Pi=\varepsilon\Delta
  \Psi\nabla\Psi,\ \ & x\in\mathbb{R}^{3},\ t>0,\\
  \nabla\cdot u=0,\ \ & x\in\mathbb{R}^{3},\ t>0,\\
  \partial_{t} v+(u\cdot \nabla)
  v=\nabla\cdot(D_{1}\nabla v-\nu_{1}v\nabla \Psi),\ \ & x\in\mathbb{R}^{3},\ t>0,\\
  \partial_{t} w+(u\cdot \nabla)
  w=\nabla\cdot(D_{2}\nabla w+\nu_{2}w\nabla \Psi),\ \ & x\in\mathbb{R}^{3},\ t>0,\\
  \varepsilon\Delta \Psi=v-w,\ \ & x\in\mathbb{R}^{3},\ t>0,\\
   (u, v, w)|_{t=0}=(u_0, v_0, w_0), \ \ & x\in\mathbb{R}^{3},
\end{cases}
\end{equation}
where $u=(u^{1},u^{2}, u^{3})$ is the velocity vector field,   $\Pi$ is the
scalar pressure,  $\Psi$
is the electrostatic potential,  $v$ and $w$ are the densities of binary diffuse negative and positive charges (e.g., ions),
respectively.  $\mu$ is the kinematic viscosity, $\varepsilon$ is the
dielectric constant of the fluid, known as the Debye length, related to vacuum
permittivity, the relative permittivity and characteristic charge density.  $D_{1}$, $D_{2}$, $\nu_{1}$, $\nu_{2}$ are
the diffusion and mobility coefficients of the charges\footnote{$D_{1}=\frac{kT_{0}\nu_{1}}{e}$,
$D_{2}=\frac{kT_{0}\nu_{2}}{e}$, where $T_{0}$ is the ambient
temperature, $k$ is the Boltzmann constant, and $e$ is the charge
mobility.}.  We refer the reader to see \cite{S09}  for the detailed mathematical description and physical background of this fluid-dynamical model. For the sake of simplicity of presentation, we shall assume that all physical parameters to be one in the system \eqref{eq1.1}.

The system \eqref{eq1.1} was introduced by Rubinstein
in \cite{R90}, which is intended to account for the electrical, fluid-mechanical and bio-chemical phenomena simultaneously occurring in complex bio-hydrid system like bio-chips, bio-reactors or micro-fluidic chambers in Lab-On-Chip technology, see \cite{BW04, EN00, ES05, SK04} for specific  applications of the system
\eqref{eq1.1} and \cite{LCJS081,LCJS082,LMCJS083} for the computational simulations. Generally speaking, the self-consistent charge transport is described by the
Poisson-Nernst-Planck equations, and the fluid motion is governed by the incompressible Navier-Stokes equations with forcing terms.

To our knowledge, mathematical analysis of the system \eqref{eq1.1} was
initiated by  Jerome \cite{J02}, where  the author established a
local well-posedness theory of the system \eqref{eq1.1}
based on the Kato's semigroup framework. Concerning existence results of
weak solutions, we refer the reader to see  \cite{J11, JS09, R09, S09}, where the system \eqref{eq1.1} adapted with various boundary conditions are considered (e.g., Neumann, no-flux, mixed boundary conditions), while considering existence results of smooth solutions, we refer the reader to see \cite{DZC11, ZDC10, ZZL15}, where local existence with any initial data and global existence with small initial data in various functional spaces (e.g., Lebesgue, Besov spaces) are established.  Since the Navier-Stokes equations is a subsystem of \eqref{eq1.1}, one can not expect better results than for the Navier-Stokes equations. Hence, in the case of three dimensional space, whether the regularity and uniqueness of global weak solutions or global existence of smooth solutions are still the challenge open problems. Some regularity and uniqueness issues have been studied by the references  \cite{FG09, FLN13, FNZ12, JF13} even for more general system for the electro-kinetic fluid model.

System \eqref{eq1.1} is closely connected to classical models in fluid mechanics and in the theory research of electrolytes.
When the charge transport is dropped, i.e., $v=w=\Psi=0$, one gets the well-known three dimensional incompressible Navier-Stokes equations:
\begin{equation}\label{eq1.2}
\begin{cases}
  \partial_{t} u+(u\cdot\nabla)u-\mu\Delta
  u+\nabla \Pi=0,\ \ & x\in\mathbb{R}^{3},\ t>0,\\
  \nabla\cdot u=0,\ \ & x\in\mathbb{R}^{3},\ t>0,\\
  u|_{t=0}=u_0,\ \ & x\in\mathbb{R}^{3}.\\
\end{cases}
\end{equation}
This equations has drawn considerable attention of
researchers for many years, and many fruitful results can be found
from the literature, e.g., \cite{BP08,CKN82,FK64,K84,KT01,L34,W15,Y10} and the references
therein.
In the pioneering work \cite{L34}, Leray proved global existence of weak solutions with any initial data in $L^{2}$ in dimensions two and three, moreover, the global existence of smooth  solutions in dimensions two and in dimensions three under smallness condition imposed on the initial data were also addressed. It is well-known that in dimensions three, without smallness condition imposed on initial data,  whether the corresponding local smooth solutions can be extended to the global one is an outstanding open problem (alternative open problem is the regularity and uniqueness of global weak solutions).  The well-known Prodi-Serrin criterion (cf. \cite{P59, S62}) shows
that any weak solution $u$ for the Navier-Stokes equations \eqref{eq1.2} satisfying
\begin{align}\label{eq1.3}
 u\in L^{q}(0,T;L^{p}(\mathbb{R}^{3}))\ \text{ with
  }\ \frac{2}{q}+\frac{3}{p}= 1\ \ \text{ and }\ \ 3<p\leq \infty,
\end{align}
then $u$ is regular on $(0,T]\times \mathbb{R}^{3}$.  Beir\~{a}o da
Veiga \cite{B95} extended the condition \eqref{eq1.3} to the following condition:
\begin{align}\label{eq1.4}
\nabla u\in
  L^{q}(0,T;L^{p}(\mathbb{R}^{3}))\ \text{ with
  }\ \frac{2}{q}+\frac{3}{p}= 2\ \ \text{ and }\ \ \frac{3}{2}<p\leq \infty.
\end{align}
In 1984,
Beale, Kato and Majda in their celebrated work \cite{BKM84} showed that if the smooth solution $u$ blows up at the time
$t=T$, then
\begin{equation}\label{eq1.5}
  \int_{0}^{T}\|\omega(\cdot,
  t)\|_{L^{\infty}}\;dt=\infty,
\end{equation}
where $\omega=\nabla\times u$ is the vorticity. Subsequently, Kozono and Taniuchi \cite{KT00} and Kozono, Ogawa and
Taniuchi \cite{KOT02}, respectively,  refined the criterion \eqref{eq1.5}
to the following two criteria:
\begin{equation}\label{eq1.6}
  \int_{0}^{T}\|\omega(\cdot, t)\|_{BMO}\;dt=\infty\
  \ \text{and}\ \ \int_{0}^{T}\|\omega(\cdot, t)\|_{\dot{B}^{0}_{\infty,
  \infty}}\;dt=\infty,
\end{equation}
where $BMO$ is the space of  \textit{Bounded Mean
Oscillation} and $\dot{B}^{0}_{\infty, \infty}(\mathbb{R}^3)$ is the homogeneous
Besov spaces. Recently, Dong and Zhang \cite{DZ10} improved the criteria \eqref{eq1.6} to the Navier-Stokes equations \eqref{eq1.2} that if
\begin{equation}\label{eq1.7}
  \int_{0}^{T}\|\nabla_{h}u^{h}(\cdot, t)\|_{\dot{B}^{0}_{\infty,
  \infty}}\;dt<\infty,
\end{equation}
then the solution $u$ can be extended beyond the time $T$, where $\nabla_{h}\stackrel{def}{=}(\partial_{1}, \partial_{2})$, and $u^{h}\stackrel{def}{=}(u^{1},u^{2})$ is the horizontal components of the velocity field $u$.

Based on the basic energy inequalities, by using the standard fixed point argument, it is easy to show local well-posedness of the system
\eqref{eq1.1} for sufficiently smooth initial data and hence we have the following result.

\begin{theorem}\label{th1.1}
Let $u_{0}\in H^{3}(\mathbb{R}^{3})$ with
$\nabla\cdot u_{0}=0$,  $v_{0}, w_{0}\in  L^{1}(\mathbb{R}^{3})\cap
H^{2}(\mathbb{R}^{3})$ and $v_{0}, w_{0}\geq0$. Then there exists a time $T_{*}>0$
such that the system \eqref{eq1.1}  admits an unique solution $(u,v,w)$ satisfying
\begin{equation}\label{eq1.8}
\begin{cases}
  u\in C([0,T], H^{3}(\mathbb{R}^{3}))\cap L^{\infty}(0,T; H^{3}(\mathbb{R}^{3}))\cap  L^{2}(0, T;
  H^{4}(\mathbb{R}^{3})),\\
  v,w\in C([0,T], H^{2}(\mathbb{R}^{3}))\cap L^{\infty}(0,T; H^{2}(\mathbb{R}^{3}))\cap  L^{2}(0, T;
  H^{3}(\mathbb{R}^{3}))
\end{cases}
\end{equation}
for any $0<T<T_{*}$. Moreover, $v,w\geq0$ a.e. in  $[0,T_{*})\times \mathbb{R}^{3}$.
\end{theorem}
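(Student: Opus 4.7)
The plan is to run a standard Friedrichs regularization combined with uniform a priori energy estimates. Let $J_n$ denote the spectral cutoff $\widehat{J_n f}(\xi)=\mathbf{1}_{|\xi|\le n}\widehat f(\xi)$ and $\mathbb P$ the Leray projector. I would consider the truncated problem
\begin{equation*}
\begin{cases}
\partial_t u_n+J_n\mathbb P\bigl((J_nu_n\cdot\nabla)J_nu_n\bigr)-\mu\Delta u_n=J_n\mathbb P\bigl((J_nv_n-J_nw_n)\nabla J_n\Psi_n\bigr),\\[2pt]
\partial_t v_n+J_n\bigl((J_nu_n\cdot\nabla)J_nv_n\bigr)=\Delta v_n-\nabla\cdot J_n(J_nv_n\nabla J_n\Psi_n),\\[2pt]
\partial_t w_n+J_n\bigl((J_nu_n\cdot\nabla)J_nw_n\bigr)=\Delta w_n+\nabla\cdot J_n(J_nw_n\nabla J_n\Psi_n),\\[2pt]
\Delta\Psi_n=v_n-w_n,
\end{cases}
\end{equation*}
with projected initial data $(J_nu_0,J_nv_0,J_nw_0)$. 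Since each nonlinearity is locally Lipschitz on the band-limited space $L^2_n=J_nL^2$, the Cauchy--Lipschitz theorem produces a unique maximal solution in $C([0,T_n);L^2_n)$, and $J_n^2=J_n$ ensures that the flow preserves this space.

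The heart of the argument is to derive an $n$-uniform bound in $H^3\times H^2\times H^2$. Applying $\partial^\alpha$ for $|\alpha|\le 3$ to the $u$-equation and $|\alpha|\le 2$ to the $v,w$-equations, testing against $\partial^\alpha u$, $\partial^\alpha v$, $\partial^\alpha w$ and summing yields
\begin{equation*}
\tfrac12\tfrac{d}{dt}\bigl(\|u\|_{H^3}^2+\|v\|_{H^2}^2+\|w\|_{H^2}^2\bigr)+\mu\|\nabla u\|_{H^3}^2+\|\nabla v\|_{H^2}^2+\|\nabla w\|_{H^2}^2\le\mathcal N,
\end{equation*}
where $\mathcal N$ collects the transport, electric-force and drift nonlinearities. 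These are estimated by the Kato--Ponce commutator inequality and the Moser product estimate, using that $H^2(\mathbb R^3)$ is an algebra and that $\nabla\Psi$ gains one derivative over $v-w$ through the Poisson equation, so $\nabla\Psi\in L^\infty_tH^3$ whenever $v,w\in L^\infty_tH^2$. Absorbing the highest-order dissipation into the left-hand side produces a Riccati-type inequality $\frac{d}{dt}E(t)\le C\,E(t)^{\alpha}$ with some $\alpha>1$, whose existence time $T_*$ depends only on $\|u_0\|_{H^3}+\|v_0\|_{H^2}+\|w_0\|_{H^2}$ and not on $n$.

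With uniform bounds in hand, Aubin--Lions compactness yields strong convergence of a subsequence of $(u_n,v_n,w_n)$ in $L^2_{\mathrm{loc}}$, which is enough to pass to the limit in all nonlinear terms. Persistence in $C([0,T];H^3)\times C([0,T];H^2)^2$ follows from the parabolic smoothing provided by $\mu\Delta u$ and $\Delta v$, $\Delta w$. For uniqueness, I would take two solutions $(u_i,v_i,w_i)$, write the system for the differences $(\delta u,\delta v,\delta w)$, and perform an $L^2$ energy estimate; the electric-force and drift remainders are handled by Sobolev embeddings using only the $H^3\times H^2\times H^2$ regularity of the reference solutions, and Gr\"onwall concludes.

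Finally, to prove $v,w\ge 0$, multiply the $v$-equation by $v^-:=\min(v,0)$ and integrate. Since $\nabla\cdot u=0$ the convective term drops, the diffusive term gives $\|\nabla v^-\|_{L^2}^2$, and the drift term rewrites as $\tfrac12\int\nabla\Psi\cdot\nabla(v^-)^2\,dx=-\tfrac12\int(v-w)(v^-)^2\,dx$, which is controlled by $\|v-w\|_{L^\infty}\|v^-\|_{L^2}^2$ via the Sobolev embedding $H^2\hookrightarrow L^\infty$. Since $v_0^-=0$, Gr\"onwall yields $v^-\equiv0$; the argument for $w$ is identical. The main obstacle I anticipate is the bookkeeping of the coupling through the Lorentz term: propagating $H^3$ regularity on $u$ while only having $H^2$ on $(v,w)$ forces one to exploit precisely the one-derivative gain of the Poisson equation and the algebra property of $H^2(\mathbb R^3)$, so that $(v-w)\nabla\Psi$ lies in $L^\infty_tH^2$ and can drive the top-order estimate on $u$ without loss.
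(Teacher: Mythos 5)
Your proposal is essentially correct, but it follows a different route from the paper, which in fact offers no proof of this theorem at all: the authors simply assert that local well-posedness follows ``by using the standard fixed point argument'' built on the basic energy inequalities, and they delegate the non-negativity of $v,w$ to Schmuck's paper \cite{S09} (this is recalled at the start of the proof of Lemma \ref{le2.3}). You instead run a Friedrichs regularization with $n$-uniform $H^{3}\times H^{2}\times H^{2}$ energy estimates, Aubin--Lions compactness, a separate $L^{2}$ uniqueness estimate, and a Stampacchia truncation argument for positivity. The trade-off is the usual one: a contraction-mapping proof delivers uniqueness and continuous dependence for free and gives $C([0,T];H^{3})$ directly, whereas your compactness scheme needs the extra uniqueness and time-continuity steps but is more robust (no need to set up a clever function space in which the Duhamel map contracts) and makes the origin of the existence time transparent through the Riccati inequality. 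Your positivity argument is a correct, self-contained replacement for the citation of \cite{S09}: the identity $\tfrac12\int\nabla\Psi\cdot\nabla(v^{-})^{2}\,dx=-\tfrac12\int(v-w)(v^{-})^{2}\,dx$ together with $H^{2}\hookrightarrow L^{\infty}$ and Gr\"onwall is exactly what is needed. One small point you should make explicit: the claim $\nabla\Psi\in L^{\infty}_{t}H^{3}$ does not follow from $v,w\in L^{\infty}_{t}H^{2}$ alone, because $\nabla(-\Delta)^{-1}$ is not bounded on $L^{2}(\mathbb{R}^{3})$ at low frequencies; you need $v-w\in L^{6/5}$, which is where the hypothesis $v_{0},w_{0}\in L^{1}$ enters (the paper performs precisely this computation for $\|\nabla\Psi_{0}\|_{L^{2}}$ via the Hardy--Littlewood--Sobolev inequality and interpolation between $L^{1}$ and $L^{2}$, and the $L^{1}$ norm propagates because $v,w\ge0$ and the equations are in divergence form). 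With that supplement your argument closes.
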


Inspired by \cite{BKM84}, the first purpose of this paper
is to establish an analog of Beale-Kato-Majda's criterion for
singularities of local smooth solutions to the system
\eqref{eq1.1}. The result shows that the maximum norm
of the vorticity alone controls the breakdown of smooth solutions,
which reveals an important fact that the velocity field $u$ plays a
more dominant role than the charge density functions $v$ and $w$ in the blow-up
theory of  smooth solutions to the system \eqref{eq1.1}.  We follow closely the arguments used in \cite{BKM84}
to accomplish the proof in the following three steps: obtaining $L^{2}$
estimates for the vorticity $\omega$ and $v,w$,
obtaining higher energy estimates for the solution $(u,v,w)$, and
applying the crucial logarithmic Sobolev inequality.

The main results of this paper are as follows:

\begin{theorem}\label{th1.2}
Let $u_{0}\in H^{3}(\mathbb{R}^{3})$ with
$\nabla\cdot u_{0}=0$,  $v_{0}, w_{0}\in L^{1}(\mathbb{R}^{3})\cap
H^{2}(\mathbb{R}^{3})$ and $v_{0}, w_{0}\geq0$.  Let  $T_{*}>0$ be the
maximum existence time such that the system
\eqref{eq1.1} admits an unique smooth solution $(u,v,w)$ satisfying \eqref{eq1.8}
for any $0<T<T_{*}$. If $T_{*}<\infty$, then
\begin{align}\label{eq1.9}
\int_{0}^{T_{*}}\|\omega(\cdot,t)\|_{L^{\infty}}\;dt=\infty.
\end{align}
In particular,
\begin{align*}
  \limsup_{t\nearrow T_{*}}\|\omega(\cdot,t)\|_{L^{\infty}}=\infty.
\end{align*}
\end{theorem}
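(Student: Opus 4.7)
I would argue by contradiction. Suppose that $T_{*}<\infty$ but $\int_{0}^{T_{*}}\|\omega(\cdot,t)\|_{L^{\infty}}\,dt<\infty$. The goal is to show that under this assumption the norm $\|u(\cdot,t)\|_{H^{3}}^{2}+\|v(\cdot,t)\|_{H^{2}}^{2}+\|w(\cdot,t)\|_{H^{2}}^{2}$ remains uniformly bounded on $[0,T_{*})$, which by Theorem~\ref{th1.1} contradicts the maximality of $T_{*}$. Following the three-step scheme hinted at just above the statement, I would proceed by deriving successively the $L^{2}$ estimate, the vorticity estimate, and the higher-order energy estimate closed by the logarithmic Sobolev inequality.

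\textbf{Step 1: $L^{2}$ and mass estimates.} Integrating the two Nernst-Planck equations in space, using $\nabla\cdot u=0$ and that the right-hand sides are in divergence form, gives the mass conservation $\|v(t)\|_{L^{1}}=\|v_{0}\|_{L^{1}}$, $\|w(t)\|_{L^{1}}=\|w_{0}\|_{L^{1}}$. Testing the evolution equations against $u$, $v$, $w$ respectively, the drift contributions combine, via $\varepsilon\Delta\Psi=v-w$ and integration by parts, into $-\tfrac{1}{2}\int(v+w)(v-w)^{2}\,dx\leq 0$ (using $v,w\geq 0$), providing an additional favourable damping. The Lorentz work term $\int\varepsilon\Delta\Psi\,\nabla\Psi\cdot u\,dx$ is controlled by H\"older's inequality together with the Calder\'on-Zygmund estimate $\|\nabla\Psi\|_{L^{6}}\lesssim\|v-w\|_{L^{6/5}}$ and Young's inequality to absorb into the viscous dissipation. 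This yields uniform $L^{2}$ bounds and $L^{2}_{t}H^{1}_{x}$ integrability of $(u,v,w)$ on $[0,T_{*})$.

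\textbf{Step 2: vorticity $L^{2}$ estimate.} Taking the curl of the momentum equation and using the identity $\nabla\times(f\nabla g)=\nabla f\times\nabla g$ with $f=\varepsilon\Delta\Psi$, I obtain
\begin{equation*}
\partial_{t}\omega+(u\cdot\nabla)\omega-\mu\Delta\omega=(\omega\cdot\nabla)u+\nabla(v-w)\times\nabla\Psi.
\end{equation*}
Testing against $\omega$, the vortex-stretching term satisfies $\bigl|\int(\omega\cdot\nabla)u\cdot\omega\,dx\bigr|\lesssim\|\omega\|_{L^{\infty}}\|\omega\|_{L^{2}}^{2}$ (via the Biot-Savart bound $\|\nabla u\|_{L^{2}}\lesssim\|\omega\|_{L^{2}}$), while the electric source is handled by integration by parts and the Step~1 bounds on $v-w$ and $\nabla\Psi$. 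Gronwall's lemma combined with the assumed $L^{1}$-in-time integrability of $\|\omega\|_{L^{\infty}}$ then produces a uniform bound on $\|\omega(\cdot,t)\|_{L^{2}}$, and hence on $\|\nabla u(\cdot,t)\|_{L^{2}}$.

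\textbf{Step 3: higher-order estimates and logarithmic Gronwall.} Applying $\partial^{\alpha}$ with $|\alpha|\leq 3$ to the $u$-equation and $|\alpha|\leq 2$ to the $(v,w)$-equations and testing against the corresponding derivatives, the top-order transport term produces the standard cubic contribution, which is bounded using the logarithmic Sobolev inequality
\begin{equation*}
\|\nabla u\|_{L^{\infty}}\leq C\bigl(1+\|\omega\|_{L^{\infty}}(1+\log(e+\|u\|_{H^{3}}))\bigr).
\end{equation*}
The coupling terms $\partial^{\alpha}(\varepsilon\Delta\Psi\nabla\Psi)$ in the $u$-equation and $\partial^{\alpha}\nabla\cdot(v\nabla\Psi)$, $\partial^{\alpha}\nabla\cdot(w\nabla\Psi)$ in the Nernst-Planck equations are controlled by Kato-Ponce commutator/product estimates together with the elliptic gain $\|\Psi\|_{H^{k+2}}\lesssim\|v-w\|_{H^{k}}$, so that no derivative is lost. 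Setting $Y(t):=e+\|u(\cdot,t)\|_{H^{3}}^{2}+\|v(\cdot,t)\|_{H^{2}}^{2}+\|w(\cdot,t)\|_{H^{2}}^{2}$, I expect to close the estimate in the schematic form
\begin{equation*}
\frac{d}{dt}Y(t)\leq C\bigl(1+\|\omega(\cdot,t)\|_{L^{\infty}}\bigr)\bigl(1+\log(e+Y(t))\bigr)Y(t),
\end{equation*}
and a double-logarithmic Gronwall argument then yields $\sup_{t<T_{*}}Y(t)<\infty$, contradicting the definition of $T_{*}$.

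\textbf{Principal difficulty.} The main obstruction is the two-way coupling in Step~3: the $H^{2}$-estimate for $(v,w)$ needs $\|\nabla u\|_{L^{\infty}}$, whereas the $H^{3}$-estimate for $u$ requires top-order control of $\Psi$ and hence of $v-w$. The resolution is to run a single simultaneous Gronwall for all three regularities, relying on elliptic regularity for $\Psi$ and on Kato-Ponce-type commutator estimates to shift derivatives off the electric force and drift terms, so that the only borderline quantity on the right-hand side is $\|\nabla u\|_{L^{\infty}}$, which is exactly what the logarithmic Sobolev inequality absorbs in terms of $\|\omega\|_{L^{\infty}}$ and $\log(e+Y(t))$.
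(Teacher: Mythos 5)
Your proposal is correct and follows essentially the same three-step contradiction argument as the paper: $L^{2}$ bounds for $(u,v,w)$ exploiting the sign of $\int(v+w)(v-w)^{2}dx$, an $L^{2}$ vorticity estimate closed by Gronwall using the assumed integrability of $\|\omega\|_{L^{\infty}}$, and higher-order energy estimates closed by the Beale--Kato--Majda logarithmic Sobolev inequality and a logarithmic Gronwall argument. The only deviations are cosmetic: the paper obtains the $L^{2}$ bound on $u$ by an exact cancellation of the Lorentz work term against the $\Psi$-tested Nernst--Planck equations rather than by H\"older--Young absorption, and it handles the curl of the electric force by integrating by parts onto $\nabla\times\omega$ rather than via the identity $\nabla\times(f\nabla g)=\nabla f\times\nabla g$.
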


\noindent\textbf{Remark 1.1}  As a byproduct of the proof of Theorem \ref{th1.1}, we obtain that under the assumptions of Theorem \ref{th1.2}, if for some $0<T<\infty$ such that
\begin{equation*}
\int_{0}^{T}\|\omega(\cdot,t)\|_{L^{\infty}}\;dt<\infty,
\end{equation*}
then the solution $(u,v,w)$ can be extended past the time $t=T$.

Motivated by the blow-up criteria \eqref{eq1.3}--\eqref{eq1.7} for the Navier-Stokes equations \eqref{eq1.2}, the second purpose of this paper is to generalize these blow-up criteria to the system
\eqref{eq1.1}.

\begin{theorem}\label{th1.3}
Let $u_{0}\in H^{3}(\mathbb{R}^{3})$ with
$\nabla\cdot u_{0}=0$,  $v_{0}, w_{0}\in L^{1}(\mathbb{R}^{3})\cap
H^{2}(\mathbb{R}^{3})$ and $v_{0}, w_{0}\geq0$.   Assume that $T_{*}>0$
is the maximal existence time such that the system \eqref{eq1.1} admits
an unique solution $(u,v,w)$ satisfying \eqref{eq1.8}  for any
$0<T<T_{*}$. Then $T_{*}<+\infty$ if and only if one of the following conditions holds:
\begin{align}\label{eq1.10}
   \int_{0}^{T_{*}}\|u(\cdot,t)\|_{L^{p}}^{q}dt=\infty\ \ \ \text{ with
  }\ \ \ \frac{2}{q}+\frac{3}{p}= 1\ \ \ \text{and}\ \ \ 3<p\leq \infty,
\end{align}
\begin{align}\label{eq1.11}
  \int_{0}^{T_{*}}\|\nabla u(\cdot,t)\|_{L^{p}}^{q}dt=\infty\ \  \ \text{ with
  }\ \ \ \frac{2}{q}+\frac{3}{p}=2\ \ \ \text{and}\ \ \ \frac{3}{2}<p\leq \infty.
\end{align}
\end{theorem}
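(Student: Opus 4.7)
\textbf{Proof proposal for Theorem \ref{th1.3}.} The nontrivial direction is the blow-up criterion: assuming, for contradiction, that $T_{*}<\infty$ while the corresponding Prodi--Serrin integral is finite on $[0,T_{*}]$, I would propagate the full functional framework \eqref{eq1.8} up to $t=T_{*}$ and then restart Theorem \ref{th1.1} at $T_{*}$ to contradict maximality. The converse direction is immediate from continuity of the norms on $[0,T_{*})$. The strategy is a hierarchy of energy estimates in which the Prodi--Serrin quantity enters only as an $L^{1}_{t}$ driving coefficient in a Gr\"onwall inequality.

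Baseline unconditional bounds come first. Testing the momentum equation against $u$ and the Nernst--Planck equations against $v,w$, combined with the Poisson relation $\varepsilon\Delta\Psi=v-w$, yields $u\in L^{\infty}(0,T_{*};L^{2})\cap L^{2}(0,T_{*};\dot H^{1})$, $\nabla\Psi\in L^{\infty}(0,T_{*};L^{2})$, and $v,w\in L^{\infty}(0,T_{*};L^{1}\cap L^{2})$. The non-negativity of $v,w$ provided by Theorem \ref{th1.1}, together with a Moser iteration on the drift-diffusion pair, upgrades $v,w$ to $L^{\infty}(0,T_{*};L^{r})$ for every $r\in[1,\infty]$.

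Next comes the critical $H^{1}$ estimate for $u$. Testing the momentum equation against $-\Delta u$, the convective integral is handled by H\"older and Gagliardo--Nirenberg: under \eqref{eq1.10} one writes $\int(u\cdot\nabla)u\cdot(-\Delta u)\,dx\lesssim \|u\|_{L^{p}}\|\nabla u\|_{L^{2}}^{1-3/p}\|\Delta u\|_{L^{2}}^{1+3/p}$, and absorbs it into $\tfrac{\mu}{4}\|\Delta u\|_{L^{2}}^{2}+C\|u\|_{L^{p}}^{q}\|\nabla u\|_{L^{2}}^{2}$ by Young's inequality whose conjugate exponent is exactly $q=2p/(p-3)$, matched by the scaling $\frac{2}{q}+\frac{3}{p}=1$; under \eqref{eq1.11} one integrates the convective integral by parts once and uses the analogous chain in terms of $\|\nabla u\|_{L^{p}}$, matched by $\frac{2}{q}+\frac{3}{p}=2$. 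The Lorentz force $\int(v-w)\nabla\Psi\cdot(-\Delta u)\,dx$ is controlled using the Calder\'on--Zygmund bound $\|\nabla^{2}\Psi\|_{L^{r}}\lesssim\|v-w\|_{L^{r}}$ and the $L^{r}$ bounds from the previous step. A Gr\"onwall argument driven by the integrable coefficient $\|u\|_{L^{p}}^{q}$ (respectively $\|\nabla u\|_{L^{p}}^{q}$) delivers $u\in L^{\infty}(0,T_{*};H^{1})\cap L^{2}(0,T_{*};H^{2})$.

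The rest is bootstrapping. Testing the Nernst--Planck equations against $-\Delta v,-\Delta w$, the critical nonlinearities $(u\cdot\nabla)v$ and $\nabla\cdot(v\nabla\Psi)$ are absorbed via the newly obtained $H^{2}$ regularity of $u$ and Poisson regularity of $\Psi$, giving $v,w\in L^{\infty}(0,T_{*};H^{1})\cap L^{2}(0,T_{*};H^{2})$. Iterating the procedure at one higher derivative level then propagates the full norm $u\in L^{\infty}(0,T_{*};H^{3})$ and $v,w\in L^{\infty}(0,T_{*};H^{2})$ required by \eqref{eq1.8}, and Theorem \ref{th1.1} restarted at $T_{*}$ extends the solution strictly beyond $T_{*}$, contradicting maximality. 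The main obstacle is the tight triple coupling $u\leftrightarrow(v,w)\leftrightarrow\nabla\Psi$: the $H^{1}$ estimate for $u$ needs good control on $\nabla\Psi$, which needs $L^{r}$ control on $v,w$, which in turn is driven by $u$ through the transport $(u\cdot\nabla)v$; these three chains must be closed simultaneously within a single Gr\"onwall inequality whose forcing is precisely the Prodi--Serrin norm, since decoupling them sequentially would destroy the critical scaling relations in \eqref{eq1.10}--\eqref{eq1.11}.
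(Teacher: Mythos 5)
Your overall strategy is sound and its core coincides with the paper's: the decisive step in both is the conditional $\dot H^{1}$ estimate for $u$ obtained by testing the momentum equation against $\Delta u$, bounding the convective term by H\"older and Gagliardo--Nirenberg, and absorbing it by Young's inequality with conjugate exponent $q=\frac{2p}{p-3}$ (resp.\ $\widetilde q=\frac{2p}{2p-3}$), so that the Prodi--Serrin quantity enters exactly as an $L^{1}_{t}$ Gr\"onwall coefficient (this is the paper's Lemma 3.1, estimates \eqref{eq3.6}--\eqref{eq3.7}). You diverge from the paper in two places. First, for the Lorentz force you invoke a Moser iteration giving $v,w\in L^{\infty}(0,T_{*};L^{r})$ for all $r\le\infty$; this is a nontrivial auxiliary claim that you assert without proof, and it is unnecessary: the paper controls $\int\Delta\Psi\nabla\Psi\cdot\Delta u\,dx$ using only the unconditional $L^{2}$ energy identities for $v,w,\nabla\Psi$ (\eqref{eq2.6}, \eqref{eq2.19}) together with the interpolation $\|f\|_{L^{4}}\le\|f\|_{L^{2}}^{1/4}\|\nabla f\|_{L^{2}}^{3/4}$, ending with the time-integrable bound $C(\|(\nabla v,\nabla w)\|_{L^{2}}^{2}+1)$; you should replace the Moser step by this argument or supply a proof. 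Second, in the endgame you propose to bootstrap all the way to $u\in L^{\infty}H^{3}$, $v,w\in L^{\infty}H^{2}$ and restart the local existence theorem, whereas the paper stops one level earlier: it only pushes to $\|\Delta u\|_{L^{2}}^{2}+\|(\nabla v,\nabla w)\|_{L^{2}}^{2}\in L^{\infty}_{t}$ with $\nabla\Delta u\in L^{2}_{t}L^{2}_{x}$, deduces $\int_{0}^{T_{*}}\|\omega\|_{L^{\infty}}\,dt<\infty$ via $H^{2}\hookrightarrow L^{\infty}$, and then invokes the already-proved Beale--Kato--Majda criterion of Theorem \ref{th1.2}. The paper's reduction is more economical because the hard logarithmic-Sobolev/commutator work at the top order is done once in Theorem \ref{th1.2} and reused; your direct bootstrap also closes (once $u\in L^{2}_{t}H^{3}$ one has $\nabla u\in L^{1}_{t}L^{\infty}_{x}$ and the Kato--Ponce $H^{3}$ estimate is linear in that quantity), but it duplicates that work. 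With the Moser step replaced or justified, your proof is correct.
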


\begin{theorem}\label{th1.4}
Let $u_{0}\in H^{3}(\mathbb{R}^{3})$ with
$\nabla\cdot u_{0}=0$,  $v_{0}, w_{0}\in L^{1}(\mathbb{R}^{3})\cap
H^{2}(\mathbb{R}^{3})$ and $v_{0}, w_{0}\geq0$.  Assume that $T_{*}>0$
is the maximal existence time such that the system \eqref{eq1.1} admits
an unique solution $(u,v,w)$ satisfying \eqref{eq1.8}  for any
$0<T<T_{*}$. Then $T_{*}<+\infty$ if and only if  the following condition holds:
\begin{align}\label{eq1.12}
  \int_{0}^{T_{*}}\|\nabla_{h} u^{h}(\cdot,t)\|_{\dot{B}^{0}_{p,\frac{2p}{3}}}^{q}dt=\infty\ \ \text{with}
  \ \ \ \frac{2}{q}+\frac{3}{p}=2\ \ \ \text{and}\ \ \ \frac{3}{2}<p\leq\infty,
\end{align}
where $\nabla_{h}=(\partial_{1}, \partial_{2})$ and $u^{h}=(u^{1}, u^{2})$.
\end{theorem}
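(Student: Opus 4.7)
The nontrivial direction is to show that finiteness of the integral in \eqref{eq1.12} on some $[0,T]$ with $T<T_{*}$ prevents blow-up at $T$. The overall strategy is to adapt the Dong--Zhang argument in \cite{DZ10} for \eqref{eq1.2} so as to accommodate the Poisson--Nernst--Planck coupling, derive an a priori $H^{1}$ bound for $(u,v,w)$, and then feed the result into Theorem \ref{th1.2} (or equivalently Remark 1.1) to continue the $H^{3}$ solution past $T$, contradicting maximality.

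First I would establish the basic energy framework. Multiplying the $v,w$ equations by $v,w$ and the $u$-equation by $u$, then using $\varepsilon\Delta\Psi=v-w$ to handle the Lorentz/drift cross-terms, I expect to recover the usual dissipative identity giving $u\in L^{\infty}_{t}L^{2}_{x}\cap L^{2}_{t}\dot H^{1}_{x}$, uniform $L^{2}$ bounds on $(v,w)$, and an $L^{2}_{t}\dot H^{1}_{x}$ bound on $\nabla\Psi$. Together with the maximum principle / standard parabolic $L^{p}$ argument applied to the density equations, this propagates $\|v\|_{L^{p}}+\|w\|_{L^{p}}$ for all $1\le p\le\infty$, and through the elliptic estimate $\|\nabla\Psi\|_{W^{1,p}}\lesssim\|v-w\|_{L^{p}}$ supplies the control of the electric forcing we need later.

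Next I would carry out the anisotropic $H^{1}$ estimate for $u$. Testing the momentum equation by $-\Delta u$ and invoking $\nabla\cdot u=0$ in the form $\partial_{3}u^{3}=-\nabla_{h}\cdot u^{h}$, the convection term $\int(u\cdot\nabla)u\cdot\Delta u\,dx$ can, after integration by parts, be re-expressed as a sum of trilinear integrals each of which carries at least one factor of $\nabla_{h}u^{h}$. Applying Littlewood--Paley decomposition and Bony's paraproduct calculus, these trilinear terms admit the bound
\[
\Bigl|\int(u\cdot\nabla)u\cdot\Delta u\,dx\Bigr|\le C\,\|\nabla_{h}u^{h}\|_{\dot{B}^{0}_{p,2p/3}}\|\nabla u\|_{L^{2}}^{\alpha}\|\Delta u\|_{L^{2}}^{\beta},
\]
with exponents $\alpha+\beta=2$ dictated by Bernstein inequalities; the second Besov index $2p/3$ is precisely what is needed for the dyadic sum to close against the $L^{2}$-based dissipation, and Young's inequality then forces the scaling $\tfrac{2}{q}+\tfrac{3}{p}=2$. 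The electrostatic forcing $\int(v-w)\nabla\Psi\cdot\Delta u\,dx$ is handled by Cauchy--Schwarz together with the $L^{p}$ bounds from the previous step, half being absorbed into $\|\Delta u\|_{L^{2}}^{2}$ and the remainder contributing a Gronwall-integrable term. A parallel computation testing the density equations by $-\Delta v,-\Delta w$ yields an $H^{1}$ estimate for $(v,w)$ that closes using only the inputs above, and combining everything one arrives at a closed differential inequality of the form
\[
\tfrac{d}{dt}\Phi(t)+\mathrm{Diss}(t)\le C\bigl(1+\|\nabla_{h}u^{h}(t)\|_{\dot{B}^{0}_{p,2p/3}}^{q}\bigr)\Phi(t),\qquad \Phi(t):=1+\|\nabla u\|_{L^{2}}^{2}+\|\nabla v\|_{L^{2}}^{2}+\|\nabla w\|_{L^{2}}^{2}.
\]
Gronwall then delivers $\Phi\in L^{\infty}(0,T)$, after which a (now routine) $H^{2}$--$H^{3}$ bootstrap—or alternatively a logarithmic Sobolev estimate yielding $\int_{0}^{T}\|\omega\|_{L^{\infty}}\,dt<\infty$ and an appeal to Theorem \ref{th1.2}—continues the solution past $T$.

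The main obstacle will be the paraproduct step: one must determine the exact exponents $\alpha,\beta$ arising from the Littlewood--Paley estimate of the convection term so that Young's inequality simultaneously reproduces the Prodi--Serrin scaling $\tfrac{2}{q}+\tfrac{3}{p}=2$ \emph{and} the specific summation index $2p/3$ in $\dot{B}^{0}_{p,2p/3}$. Matching the time-summability of the dyadic blocks of $\nabla_{h}u^{h}$ against the space-summability produced by Bernstein in the $u\cdot\nabla u$ paraproduct is delicate, and the endpoint $p=\infty$ (in which \eqref{eq1.12} reduces to the Dong--Zhang criterion \eqref{eq1.7}) is expected to require a separate logarithmic refinement of this estimate.
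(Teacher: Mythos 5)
Your overall skeleton matches the paper's: reduce to an a priori $H^{1}$-level bound on $u$ by exploiting the fact that, after using $\nabla\cdot u=0$ (so $\partial_{3}u^{3}=-\nabla_{h}\cdot u^{h}$) and $\omega_{3}=\partial_{1}u^{2}-\partial_{2}u^{1}$, every trilinear term in the convection/vortex-stretching integral can be made to carry a factor of $\nabla_{h}u^{h}$; handle the electrostatic forcing with the unconditional $L^{2}$ energy bounds on $(v,w,\nabla\Psi)$; then bootstrap and invoke the criterion of Theorem \ref{th1.2}. (The paper works with the vorticity equation and $\int(\omega\cdot\nabla)u\cdot\omega\,dx$, you with $-\Delta u$ and $\sum\int\partial_{k}u^{i}\partial_{i}u^{j}\partial_{k}u^{j}\,dx$; these are equivalent reductions, and your algebraic observation is correct. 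Your detour through $L^{p}$/$L^{\infty}$ bounds on the densities is unnecessary: the paper only ever uses the $L^{2}$ bounds of Lemma \ref{le2.3} to treat the forcing term, exactly as in $I_{2}$ of \eqref{eq2.23}.)

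The genuine gap is in the key estimate. You posit a bound of the form $\bigl|\int(u\cdot\nabla)u\cdot\Delta u\,dx\bigr|\le C\|\nabla_{h}u^{h}\|_{\dot{B}^{0}_{p,2p/3}}\|\nabla u\|_{L^{2}}^{\alpha}\|\Delta u\|_{L^{2}}^{\beta}$ with $\alpha+\beta=2$, and hence a Gronwall inequality \emph{linear} in $\Phi$. Such a bound would follow from H\"older if $\dot{B}^{0}_{p,2p/3}\hookrightarrow L^{p}$, but that embedding fails as soon as $2p/3>2$, i.e. $p>3$ (one only has $\dot{B}^{0}_{p,r}\hookrightarrow L^{p}$ for $r\le\min(p,2)$), and no paraproduct rearrangement recovers a clean product estimate of this type. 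What the paper actually does in \eqref{eq4.4}--\eqref{eq4.7} is split $\nabla_{h}u^{h}$ into frequencies $j<-N$, $|j|\le N$ and $j>N$: the middle block costs a factor $N^{\frac{2p-3}{2p}}$ from H\"older over the $2N+1$ dyadic indices, while the high block produces $C2^{-N/2}\|\omega\|_{L^{2}}\|\nabla\omega\|_{L^{2}}^{2}$, which can only be absorbed into the dissipation by choosing $N\sim\ln(e+\|\omega\|_{L^{2}}^{2})$. Substituting this $N$ back turns the middle-block contribution into $C\|\nabla_{h}u^{h}\|_{\dot{B}^{0}_{p,2p/3}}^{q}\|\omega\|_{L^{2}}^{2}\ln(e+\|\omega\|_{L^{2}}^{2})$, so the closed inequality is \eqref{eq4.10}, namely $\frac{d}{dt}Z\le C(\cdots)\,Z\ln Z$ with $Z=e+\|\omega\|_{L^{2}}^{2}$, yielding a double-exponential bound rather than the single-exponential Gronwall you wrote. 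You anticipate a ``logarithmic refinement'' only at the endpoint $p=\infty$, but this logarithmic splitting is the heart of the proof for every $p>3$ (and is how the paper argues for all $p$); without it your closure step does not go through.
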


\noindent\textbf{Remark 1.2} It is clear that when $p=\infty$, the condition \eqref{eq1.12} becomes $\int_{0}^{T_{*}}\|\nabla_{h} u^{h}\|_{\dot{B}^{0}_{\infty,\infty}}dt=\infty$, thus Theorem \ref{th1.4} can be regarded as an extension of the blow up criteria \eqref{eq1.5}--\eqref{eq1.7}, and is even new for the Navier-Stokes equations \eqref{eq1.2}.

Before ending this section, we recall the definition of the homogeneous Besov spaces, for more details, see \cite{BCD11} and \cite{T83}. Let $\phi: \mathbb{R}^{3}\rightarrow[0,1]$ be a smooth cut-off function which equals one on the ball
$\mathcal{B}(0, \frac{5}{4})\stackrel{def}{=}\{\xi\in\mathbb{R}^3:\  |\xi|\leq \frac{5}{4}\}$ and equals zero outside the ball $\mathcal{B}(0, \frac{3}{2})$. Write
$$
  \varphi\stackrel{def}{=}\phi(\xi)-\phi(2\xi) \ \ \ \text{and}\
   \ \ \varphi_{j}(\xi)\stackrel{def}{=}\varphi(2^{-j}\xi).
$$
Then we define
for all $j\in \mathbb{Z}$,
\begin{align*}
  \Delta_{j}f\stackrel{def}{=}\varphi(2^{-j}D)f .
\end{align*}
We have the following formal decomposition:
\begin{equation}\label{eq1.13}
  f=\sum_{j\in\mathbb{Z}}\Delta_{j}f \ \ \text{for}\ \
  f\in\mathcal{S}'(\mathbb{R}^{3})/\mathcal{P}(\mathbb{R}^{3}),
\end{equation}
where $\mathcal{P}(\mathbb{R}^{3})$ is the set of polynomials (see \cite{BCD11}).
Let $s\in \mathbb{R}$, $1\leq p,r\leq\infty$ and $f\in\mathcal{S}'(\mathbb{R}^{3})$, the space of temperate distributions, the norms in homogeneous Besov spaces are defined as follows:
\begin{equation*}
  \|f\|_{\dot{B}^{s}_{p,r}}\stackrel{def}{=} \begin{cases} \left(\sum_{j\in\mathbb{Z}}2^{jsr}\|\Delta_{j}f\|_{L^{p}}^{r}\right)^{\frac{1}{r}}
  \ \ &\text{for}\ \ 1\leq r<\infty,\\
  \sup_{j\in\mathbb{Z}}2^{js}\|\Delta_{j}f\|_{L^{p}}\ \
  &\text{for}\ \
  r=\infty.
 \end{cases}
\end{equation*}
Then the homogeneous Besov
space $\dot{B}^{s}_{p,r}(\mathbb{R}^{3})$ is defined by
\begin{itemize}
\item For $s<\frac{3}{p}$ (or $s=\frac{3}{p}$ if $r=1$), we define
\begin{equation*}
  \dot{B}^{s}_{p,r}(\mathbb{R}^{3})\stackrel{def}{=}\Big\{f\in \mathcal{S}'(\mathbb{R}^{3}):\ \
  \|f\|_{\dot{B}^{s}_{p,r}}<\infty\Big\}.
\end{equation*}
\item If $k\in\mathbb{N}$ and $\frac{3}{p}+k\leq s<\frac{3}{p}+k+1$ (or $s=\frac{3}{p}+k+1$ if $r=1$), then $\dot{B}^{s}_{p,r}(\mathbb{R}^{3})$
is defined as the subset of distributions $f\in\mathcal{S}'(\mathbb{R}^{3})$ such that $\partial^{\beta}f\in\mathcal{S}'(\mathbb{R}^{3})$
whenever $|\beta|=k$.
\end{itemize}

Another classical results that will be used here are the so-called Bernstein's inequalities (cf. \cite{BCD11}): there exists a constant $C$ such that
for every $f\in\mathcal{S}'(\mathbb{R}^{3})$, any nonnegative integer $k$  and any
couple of real numbers $(p,q)$ with $1\leq p\leq q\leq\infty$, we have
\begin{align}\label{eq1.14}
  &\operatorname{supp} \widehat{f}\subset\{|\xi|\leq2^{j}\}\ \ \Longrightarrow\ \
  \sup_{|\alpha|=k}\|\partial^{\alpha}f\|_{L^{q}}\leq
  C2^{jk+3j(\frac{1}{p}-\frac{1}{q})}\|f\|_{L^{p}},\\
  \label{eq1.15}
   &\operatorname{supp}\hat{f}\subset\{|\xi|\approx 2^{j}\} \ \ \Longrightarrow\ \   C^{-1-k}2^{jk}\|f\|_{L^{p}}\leq
   \sup_{|\alpha|=k}\|\partial^{\alpha}f\|_{L^{p}}\leq  C^{1+k}2^{jk}\|f\|_{L^{p}},
\end{align}
where $\hat{f}$ represents the Fourier transform of $f$.

The rest of this paper is organized as follows: We shall present the proof of Theorem \ref{th1.2} in the Section 2, while the proof of Theorem \ref{th1.3} will be presented in Section 3 and Theorem \ref{th1.4} in Section 4. Throughout the paper, we denote by $C$ and $C_{i}$ ($i=0,1, \cdots$) the harmless positive constants, which may depend on the initial data and its value may change from line to line, the special dependence will be pointed out explicitly in the text if necessary.

\section{Proof of Theorem \ref{th1.2}}

We prove Theorem \ref{th1.2} by contradiction. It suffices to show that if the condition \eqref{eq1.9}  does not hold, i.e.,
there exists a constant $K>0$ such that
\begin{align}\label{eq2.1}
\int_{0}^{T_{*}}\|\omega(\cdot,t)\|_{L^{\infty}}dt\leq K,
\end{align}
 then one can prove that
\begin{equation}\label{eq2.2}
  \sup_{0\leq t\leq T_{*}}\big(\|u(\cdot,t)\|_{H^{3}}+\|(v(\cdot,t), w(\cdot,t))\|_{H^{2}}\big)\leq C_{0},
\end{equation}
where $C_{0}$ is a constant depending only on $\|u_0\|_{H^{3}}$,
$\|(v_0,w_0)\|_{L^{1}\cap H^{2}}$, $T_{*}$  and $K$.
This implies that the solution $(u,v,w)$ can be extended beyond the time $T_{*}$, which contradicts to the maximality of $T_{*}$.

Let us first recall the
following two important inequalities that play an important roles in the proof of Theorem \ref{th1.2}, for its proof, see \cite{KP88} and
\cite{BKM84}, respectively.

\begin{lemma}\label{le2.1} {\em(\cite{KP88})}
If $s>0$ and $1<p<\infty$, then
\begin{equation}\label{eq2.3}
   \|\Lambda^{s}(fg)-f\Lambda^{s}g\|_{L^{p}}\leq
   C\big(\|\Lambda
   f\|_{L^{\infty}}\|\Lambda^{s-1}g\|_{L^{p}}+\|\Lambda^{s}f\|_{L^{p}}\|g\|_{L^{\infty}}\big),
\end{equation}
where $\Lambda=(-\Delta)^{\frac{1}{2}}$, and $C$ is a constant
depending only on $s$ and $p$.
\end{lemma}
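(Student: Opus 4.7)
My plan is to use the Littlewood--Paley decomposition from \eqref{eq1.13} together with Bony's paraproduct formalism, which is well adapted to commutators of Fourier multipliers with pointwise multiplication. Writing $S_{j-1}f := \sum_{k\leq j-2}\Delta_{k}f$ for the low-frequency cutoff, I would first decompose $fg = T_{f}g + T_{g}f + R(f,g)$, where $T_{f}g := \sum_{j}S_{j-1}f\,\Delta_{j}g$ is the paraproduct and $R(f,g) := \sum_{|j-k|\leq 1}\Delta_{j}f\,\Delta_{k}g$ is the remainder, and I would decompose $f\Lambda^{s}g$ in the same way. Subtracting, the commutator $\Lambda^{s}(fg) - f\Lambda^{s}g$ reduces to three model pieces: the diagonal paraproduct commutator $\sum_{j}[\Lambda^{s},S_{j-1}f]\Delta_{j}g$, an off-diagonal difference coming from $\Lambda^{s}T_{g}f - T_{\Lambda^{s}g}f$, and the remainder commutator $\Lambda^{s}R(f,g) - R(f,\Lambda^{s}g)$.

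The heart of the proof is estimating the first piece. Each dyadic block is a bilinear Fourier multiplier with symbol essentially of the form $|\xi+\eta|^{s} - |\eta|^{s}$ on the region $|\xi|\ll |\eta|\sim 2^{j}$. A first-order Taylor expansion in $\xi$ yields $|\xi+\eta|^{s} - |\eta|^{s} = s|\eta|^{s-2}\eta\cdot\xi + O(|\xi|^{2}|\eta|^{s-2})$, so modulo a uniformly bounded Fourier multiplier the $j$-th commutator is controlled pointwise by $|\nabla S_{j-1}f|$ times a smoothed version of $|\Lambda^{s-1}\Delta_{j}g|$. H\"older's inequality then gives an $L^{p}$ bound for each block, and summing via Bernstein's inequalities \eqref{eq1.14}--\eqref{eq1.15} and the $L^{p}$ Littlewood--Paley square-function characterization (which is where the hypothesis $1<p<\infty$ is used) yields the contribution $\|\Lambda f\|_{L^{\infty}}\|\Lambda^{s-1}g\|_{L^{p}}$.

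The two remaining pieces are handled symmetrically. For $T_{g}f$, the frequencies of $g$ are much smaller than those of $f$, so $\Lambda^{s}$ essentially acts on $f$ alone and the dyadic blocks are bounded in $L^{p}$ by $\|g\|_{L^{\infty}}\|\Lambda^{s}f\|_{L^{p}}$ after summation. The high--high piece $R(f,g)$ uses the almost-orthogonality of the interactions $\Delta_{j}f\,\Delta_{k}g$ with $|j-k|\leq 1$ and the smoothing action of $\Lambda^{s}$ on a sum of blocks of comparable dyadic size; this contributes a term of the same shape as one of the other two. Summing the three estimates produces \eqref{eq2.3}.

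The main obstacle will be the careful kernel/symbol estimate for the bilinear multiplier $|\xi+\eta|^{s}-|\eta|^{s}$ in the low--high regime: one needs an $L^{1}$ kernel bound uniform in the dyadic parameter $j$, so that the commutator really behaves like a bounded singular integral of order $s-1$ applied to $g$ times a pointwise factor of $\nabla f$. Once this symbolic step is in place, the remaining arguments are a routine combination of Bernstein's inequalities, H\"older's inequality, and the Littlewood--Paley characterization of $L^{p}$ norms.
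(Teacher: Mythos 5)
The paper does not actually prove Lemma~\ref{le2.1}: it is imported verbatim from the Kato--Ponce paper \cite{KP88}, so there is no in-paper argument to compare against. Your paraproduct outline is nevertheless a correct sketch of the standard modern proof of this commutator estimate (the route one finds, e.g., in \cite{BCD11}), whereas the original argument in \cite{KP88} is genuinely different: it works with the inhomogeneous operator $J^{s}=(1-\Delta)^{s/2}$, decomposes the bilinear symbol $\sigma(\xi,\eta)=(1+|\xi+\eta|^{2})^{s/2}-(1+|\eta|^{2})^{s/2}$ over the regions $|\xi|\lesssim|\eta|$ and $|\xi|\gtrsim|\eta|$, and invokes the Coifman--Meyer multilinear multiplier theorem rather than summing dyadic blocks by hand. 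The paraproduct route buys a cleaner treatment of the homogeneous operator $\Lambda^{s}$ as stated here and makes transparent where $s>0$ and $1<p<\infty$ enter (the remainder $R(f,g)$ and the square-function summation, respectively). Two points in your sketch deserve more care than you give them. First, the contribution $\|\Lambda f\|_{L^{\infty}}\|\Lambda^{s-1}g\|_{L^{p}}$ does not come only from the diagonal commutator $\sum_{j}[\Lambda^{s},S_{j-1}f]\Delta_{j}g$; the subtracted piece $T_{\Lambda^{s}g}f$ also produces it, via $\|S_{j-1}\Lambda^{s}g\,\Delta_{j}f\|_{L^{p}}\lesssim\sum_{k\leq j-2}2^{k-j}\|\Lambda^{s-1}\Delta_{k}g\|_{L^{p}}\,\|\nabla f\|_{L^{\infty}}$, so calling that piece ``handled symmetrically'' with weight $\|g\|_{L^{\infty}}\|\Lambda^{s}f\|_{L^{p}}$ is not quite right. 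Second, in the Taylor expansion of $|\xi+\eta|^{s}-|\eta|^{s}$ the quadratic remainder $O(|\xi|^{2}|\eta|^{s-2})$ involves two derivatives of $S_{j-1}f$ and must be brought back to a single derivative by Bernstein ($\|\nabla^{2}S_{j-1}f\|_{L^{\infty}}\lesssim 2^{j}\|\nabla f\|_{L^{\infty}}$), which is what restores the correct dyadic weight; and strictly speaking the statement as written with $\|\Lambda f\|_{L^{\infty}}$ rather than $\|\nabla f\|_{L^{\infty}}$ is the weaker-looking but nonequivalent form (the Riesz transforms are unbounded on $L^{\infty}$), the original \cite{KP88} bound being in terms of $\|\nabla f\|_{L^{\infty}}$, which is also all that is used later in Section~2.
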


\begin{lemma}\label{le2.2} {\em(\cite{BKM84})}
For all $u\in H^{3}(\mathbb{R}^3)$, $\omega=\nabla\times u$, there exists a
constant $C$ such that
\begin{equation}\label{eq2.4}
   \|\nabla u\|_{L^{\infty}}\leq
  C\big(1+\|\omega\|_{L^{2}}+\|\omega\|_{L^{\infty}}\ln(e+\|u\|_{H^{3}})\big).
\end{equation}
\end{lemma}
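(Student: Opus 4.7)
My approach is the standard Littlewood--Paley proof: decompose $\nabla u$ dyadically, split the sum into low, middle and high frequency bands at a cutoff $N$ to be chosen, and optimize $N$ as a function of $\|u\|_{H^{3}}$. The key structural input is the Biot--Savart relation: since $\nabla\cdot u=0$, one has $-\Delta u=\nabla\times\omega$, so $\nabla u=\mathcal{R}\omega$ where $\mathcal{R}$ is a matrix of zero-order Fourier multipliers (Riesz-type transforms with symbols that are smooth and homogeneous of degree $0$ away from the origin).

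Using the partition \eqref{eq1.13}, I write
\[
\nabla u=\sum_{j<0}\Delta_{j}\nabla u+\sum_{0\leq j\leq N}\Delta_{j}\nabla u+\sum_{j>N}\Delta_{j}\nabla u=:\mathrm{I}+\mathrm{II}+\mathrm{III}.
\]
For the \textbf{low-frequency} part I use Bernstein's inequality \eqref{eq1.14} together with the block-level identity $\|\Delta_{j}\nabla u\|_{L^{2}}\leq C\|\Delta_{j}\omega\|_{L^{2}}$ (Plancherel, since the multiplier between $\nabla u$ and $\omega$ is bounded on any single frequency annulus), to get
\[
\|\mathrm{I}\|_{L^{\infty}}\leq\sum_{j<0}C\,2^{3j/2}\|\Delta_{j}\omega\|_{L^{2}}\leq C\|\omega\|_{L^{2}}.
\]
For the \textbf{middle} part, I exploit that $\Delta_{j}\nabla u=\Delta_{j}\mathcal{R}\omega$ is the convolution of $\omega$ with a kernel $K_{j}$ whose $L^{1}$ norm is bounded independently of $j$: by scaling $K_{j}(x)=2^{3j}K_{0}(2^{j}x)$, and $K_{0}$ is the inverse Fourier transform of $\varphi(\eta)\mathcal{R}(\eta)$, which is a Schwartz function since $\varphi$ is smooth and compactly supported away from the origin. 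Hence $\|\Delta_{j}\nabla u\|_{L^{\infty}}\leq C\|\omega\|_{L^{\infty}}$ uniformly in $j$, giving $\|\mathrm{II}\|_{L^{\infty}}\leq C(N+1)\|\omega\|_{L^{\infty}}$. For the \textbf{high} part, combining Bernstein \eqref{eq1.14} with the $H^{3}$ control $\|\Delta_{j}\nabla u\|_{L^{2}}\leq 2^{-2j}\|u\|_{H^{3}}$ yields
\[
\|\mathrm{III}\|_{L^{\infty}}\leq\sum_{j>N}C\,2^{3j/2}\cdot 2^{-2j}\|u\|_{H^{3}}\leq C\,2^{-N/2}\|u\|_{H^{3}}.
\]

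Adding the three bounds gives $\|\nabla u\|_{L^{\infty}}\leq C\bigl(\|\omega\|_{L^{2}}+N\|\omega\|_{L^{\infty}}+2^{-N/2}\|u\|_{H^{3}}\bigr)$, and the \textbf{optimal choice} $N:=\lceil 2\log_{2}(e+\|u\|_{H^{3}})\rceil$ makes the high-frequency remainder $O(1)$ while forcing $N\leq C\ln(e+\|u\|_{H^{3}})$, producing \eqref{eq2.4}. The step I expect to be the main obstacle is the uniform bound $\|K_{j}\|_{L^{1}}\leq C$ for the Biot--Savart kernels localized to dyadic annuli; once that is in hand, everything else is bookkeeping with Bernstein inequalities. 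A minor point to verify is that the exponent $s=3$ on the right-hand side is exactly what matches the $2^{-j/2}$ geometric decay after summing the high-frequency blocks, so this Sobolev index is genuinely the minimum at which the argument closes without inserting fractional powers inside the logarithm.
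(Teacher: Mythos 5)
Your argument is correct and complete, but note that the paper itself contains no proof of this lemma: it is quoted from \cite{BKM84}, and the original Beale--Kato--Majda argument is a \emph{physical-space} one, not your frequency-space one. In \cite{BKM84} one writes $\nabla u$ through the Biot--Savart law as a singular integral of $\omega$, splits the convolution kernel into a near region $|y|\leq\rho$, an intermediate annulus $\rho\leq|y|\leq 1$, and a far region $|y|\geq 1$; the far region is controlled by $\|\omega\|_{L^{2}}$ via Cauchy--Schwarz, the intermediate annulus produces $\|\omega\|_{L^{\infty}}\ln(1/\rho)$ from $\int_{\rho}^{1}r^{-1}\,dr$, the near region is controlled by higher regularity after a cancellation (subtracting the mean of $\omega$, or integrating by parts), and finally $\rho\sim(e+\|u\|_{H^{3}})^{-1}$ is optimized. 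Your three dyadic bands are the exact frequency-side mirror of this: low band $\leftrightarrow$ far field ($\|\omega\|_{L^{2}}$), the $N$ middle blocks $\leftrightarrow$ the logarithm (each block contributing a uniform $C\|\omega\|_{L^{\infty}}$ via the $L^{1}$-normalized kernels $K_{j}$, whose uniform bound you correctly identified as the crux and correctly settled by homogeneity of the Biot--Savart symbol plus scaling), high band $\leftrightarrow$ near-field smoothness. What your route buys over the original: it avoids the delicate near-field cancellation entirely, and it upgrades for free to $\|\omega\|_{\dot{B}^{0}_{\infty,\infty}}$ in place of $\|\omega\|_{L^{\infty}}$ (the refinement \eqref{eq1.6} of \cite{KOT02}), since the middle-band estimate only uses $\sup_{j}\|\Delta_{j}\mathcal{R}\omega\|_{L^{\infty}}$; what the original buys is elementarity, needing nothing beyond explicit kernel bounds.

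Two minor corrections. First, as stated the lemma omits the hypothesis $\nabla\cdot u=0$, which your proof (necessarily) invokes: without it $\omega$ does not determine $\nabla u$, and \eqref{eq2.4} fails, e.g., for large gradient fields $u=\nabla\phi$, for which $\omega=0$. This is harmless here since the paper only applies the lemma to the divergence-free velocity. Second, your closing remark that $s=3$ is the minimal Sobolev index for which the argument closes is not right: the high-frequency sum $\sum_{j>N}2^{(\frac{5}{2}-s)j}$ converges for every $s>\frac{5}{2}$, yielding the same inequality with $\ln(e+\|u\|_{H^{s}})$ and no fractional powers inside the logarithm; $s=3$ is simply the regularity the paper's solutions carry.
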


Next we establish the $L^{2}$
bounds for the vorticity $\omega$, $v$ and $w$ under the assumptions of Theorem \ref{th1.2} and the condition \eqref{eq2.1}.

\begin{lemma}\label{le2.3}
Assume that $u_{0}\in H^{1}(\mathbb{R}^3)$ with $\nabla\cdot u_{0}=0$, $(v_{0},w_{0})\in L^{1}(\mathbb{R}^3)\cap L^{2}(\mathbb{R}^3)$  and $v_{0}, w_{0}\geq0$. Let $(u,v,w)$ be the
corresponding local smooth solution to the system
\eqref{eq1.1} on $[0,T)$ for some $0<T<\infty$. If
\begin{align}\label{eq2.5}
\int_{0}^{T}\|\omega(\cdot,t)\|_{L^{\infty}}\;dt=K<\infty,
\end{align}
then we have
\begin{align}\label{eq2.6}
  \sup_{0\leq t\leq T}\Big(\|v(\cdot,t)\|_{L^{2}}^{2}+\|w(\cdot,t)\|_{L^{2}}^{2}\Big)+2\int_{0}^{T}\Big(\|\nabla
  v(\cdot,t)\|_{L^{2}}^{2}+\|\nabla
  w(\cdot,t)\|_{L^{2}}^{2}\Big)dt\leq C_{0}
\end{align}
and
\begin{align}\label{eq2.7}
  \sup_{0\leq t\leq T}\|\omega(\cdot,t)\|_{L^{2}}^{2}+\int_{0}^{T}\|\nabla
  \omega(\cdot,t)\|_{L^{2}}^{2}dt\leq C_{0},
\end{align}
where $C_{0}$ is a constant depending only on $\|u_{0}\|_{H^{1}}$, $\|(v_{0},w_{0})\|_{L^{1}\cap L^{2}}$, $T$ and $K$.
\end{lemma}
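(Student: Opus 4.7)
My plan is to prove the two bounds in turn, with the density estimate \eqref{eq2.6} essentially coming for free from the structure of the Poisson--Nernst--Planck coupling, and then feeding into the vorticity estimate \eqref{eq2.7}.

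\textbf{Step 1 (densities $v,w$).} I would multiply the $v$-equation by $v$ and integrate. The convection $\int(u\cdot\nabla v)v\,dx$ vanishes by $\nabla\cdot u=0$, and the drift term, after integration by parts, becomes $-\int v\,\nabla\cdot(v\nabla\Psi)\,dx=-\tfrac12\int v^{2}\Delta\Psi\,dx=-\tfrac12\int v^{2}(v-w)\,dx$, so that
\[
\tfrac{1}{2}\tfrac{d}{dt}\|v\|_{L^{2}}^{2}+\|\nabla v\|_{L^{2}}^{2}=-\tfrac12\int v^{2}(v-w)\,dx.
\]
The symmetric identity for $w$ gives $\tfrac12\tfrac{d}{dt}\|w\|_{L^{2}}^{2}+\|\nabla w\|_{L^{2}}^{2}=\tfrac12\int w^{2}(v-w)\,dx$. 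Summing and using $v^{3}+w^{3}-vw(v+w)=(v+w)(v-w)^{2}$, the cubic terms consolidate into $-\tfrac12\int(v+w)(v-w)^{2}\,dx\leq0$ by the nonnegativity of $v,w$. Integrating in time delivers \eqref{eq2.6} with a constant depending only on $\|(v_{0},w_{0})\|_{L^{2}}$; note that the hypothesis \eqref{eq2.5} is \emph{not} used here.

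\textbf{Step 2 (vorticity energy identity).} Taking the curl of the momentum equation and using $\nabla\times\nabla\Psi=0$, the Lorentz force becomes $\nabla(v-w)\times\nabla\Psi$, so
\[
\partial_{t}\omega+(u\cdot\nabla)\omega-(\omega\cdot\nabla)u-\Delta\omega=\nabla(v-w)\times\nabla\Psi.
\]
Testing against $\omega$: the vortex-stretching term is bounded by
$|\int(\omega\cdot\nabla)u\cdot\omega\,dx|\leq\|\omega\|_{L^{\infty}}\|\nabla u\|_{L^{2}}\|\omega\|_{L^{2}}\leq C\|\omega\|_{L^{\infty}}\|\omega\|_{L^{2}}^{2}$
via the Biot--Savart bound $\|\nabla u\|_{L^{2}}\leq C\|\omega\|_{L^{2}}$. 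For the electric forcing (the crux of the argument), I would use $\nabla(v-w)\times\nabla\Psi=\nabla\times((v-w)\nabla\Psi)$ and integrate by parts to rewrite $\int\omega\cdot(\nabla(v-w)\times\nabla\Psi)\,dx=\int(v-w)\nabla\Psi\cdot(\nabla\times\omega)\,dx$. Then H\"older, the three-dimensional Gagliardo--Nirenberg inequality $\|v-w\|_{L^{3}}\leq C\|v-w\|_{L^{2}}^{1/2}\|\nabla(v-w)\|_{L^{2}}^{1/2}$, and the Sobolev embedding $\dot H^{1}\hookrightarrow L^{6}$ combined with $\Delta\Psi=v-w$ (giving $\|\nabla\Psi\|_{L^{6}}\leq C\|v-w\|_{L^{2}}$) yield, via Young's inequality absorbing $\tfrac12\|\nabla\omega\|_{L^{2}}^{2}$ into the dissipation,
\[
\tfrac{d}{dt}\|\omega\|_{L^{2}}^{2}+\|\nabla\omega\|_{L^{2}}^{2}\leq C\|\omega\|_{L^{\infty}}\|\omega\|_{L^{2}}^{2}+C\|v-w\|_{L^{2}}^{3}\|\nabla(v-w)\|_{L^{2}}.
\]

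\textbf{Step 3 (Gronwall).} By Step~1, $\|v-w\|_{L^{2}}$ is uniformly bounded and $\int_{0}^{T}\|\nabla(v-w)\|_{L^{2}}\,dt\leq\sqrt{T}\bigl(\int_{0}^{T}\|\nabla(v-w)\|_{L^{2}}^{2}\,dt\bigr)^{1/2}\leq C$; and by \eqref{eq2.5} the coefficient $\|\omega\|_{L^{\infty}}$ is in $L^{1}(0,T)$. Gronwall's lemma then closes \eqref{eq2.7}. The main technical obstacle is the electric forcing: since $\nabla\Psi$ is obtained from $v-w$ by a Riesz-type operator, no $L^{\infty}$ bound on $\nabla\Psi$ is available at this $L^{2}$ level, so a naive estimate fails. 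The decisive move is the integration by parts that transfers one derivative from $v-w$ onto $\omega$, whose $L^{2}$ norm of the gradient can be paid for against the parabolic dissipation.
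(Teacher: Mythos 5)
Your proposal is correct, and its skeleton coincides with the paper's: an $L^{2}$ energy identity for the densities whose cubic term has a sign thanks to $v,w\geq0$, followed by a vorticity energy estimate in which the stretching term is controlled by $\|\omega\|_{L^{\infty}}\|\omega\|_{L^{2}}^{2}$ and Gronwall is applied using \eqref{eq2.5}. Two points differ. First, your Step~1 works directly with $v$ and $w$ and the algebraic identity $v^{2}(v-w)-w^{2}(v-w)=(v+w)(v-w)^{2}$, whereas the paper passes to the symmetrized variables $\zeta=v+w$, $\eta=v-w$ and obtains $\int\zeta\eta^{2}\,dx\geq0$; this is the same computation in different coordinates. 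Second, and more substantively, your treatment of the electric forcing is genuinely leaner: after the same integration by parts onto $\nabla\times\omega$, you use the H\"older split $\|v-w\|_{L^{3}}\|\nabla\Psi\|_{L^{6}}\|\nabla\omega\|_{L^{2}}$ together with $\|\nabla\Psi\|_{L^{6}}\leq C\|\Delta\Psi\|_{L^{2}}=C\|v-w\|_{L^{2}}$, so that everything is already controlled by Step~1. The paper instead uses an $L^{4}\times L^{4}\times L^{2}$ split with the interpolation $\|f\|_{L^{4}}\leq\|f\|_{L^{2}}^{1/4}\|\nabla f\|_{L^{2}}^{3/4}$, which leaves a factor $\|\nabla\Psi\|_{L^{2}}$ that must be bounded by a separate coupled $u$--$\Psi$ energy identity (its \eqref{eq2.19}), and that identity is where the hypothesis $v_{0},w_{0}\in L^{1}$ enters (to control $\|\nabla\Psi_{0}\|_{L^{2}}$). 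Your route bypasses \eqref{eq2.19} and the $L^{1}$ assumption entirely for this lemma, at no cost; the paper's extra estimate is, however, reused later in the higher-order bounds, so it is not wasted work in the larger argument.
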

\begin{proof} We first recall from \cite{S09} that if $v_0$ and $w_0$  are non-negative, then $v$ and $w$ are also non-negative.
Next, in order to derive
$L^{2}$ bounds of $v$ and $w$, we introduce two symmetries: $\zeta\stackrel{def}{=}v+w$ and $\eta\stackrel{def}{=}v-w$. Then we infer from
\eqref{eq1.1} that  $\zeta$
and $\eta$ satisfy the following system:
\begin{equation}\label{eq2.8}
\begin{cases}
  \partial_{t}\zeta+(u\cdot\nabla) \zeta=\Delta \zeta-\nabla\cdot(\eta\nabla\Psi),\\
  \partial_{t}\eta+(u\cdot\nabla) \eta=\Delta\eta-\nabla\cdot (\zeta\nabla\Psi),\\
  \Delta\Psi=\eta,\\
  \zeta_{0}=v_{0}+w_{0},\ \ \eta_{0}=v_{0}-w_{0}.
\end{cases}
\end{equation}
Multiplying the first equation of \eqref{eq2.8} by $\zeta$ and integrating over $\mathbb{R}^{3}$, one has
\begin{equation*}
  \frac{1}{2}\frac{d}{dt}\|\zeta\|_{L^{2}}^{2}+\|\nabla
  \zeta\|_{L^{2}}^{2}+\int_{\mathbb{R}^{3}}\zeta\nabla \eta\cdot\nabla\Psi dx+\int_{\mathbb{R}^{3}}\zeta\eta\Delta\Psi
  dx=0,
\end{equation*}
where we have used the fact
$$
\int_{\mathbb{R}^{3}}(u\cdot\nabla) \zeta
\zeta dx=\frac{1}{2}\int_{\mathbb{R}^{3}}u\cdot\nabla (\zeta^2)dx=0
$$
due to  $\nabla\cdot u=0$.
The third equation of \eqref{eq2.8} yields that
\begin{equation}\label{eq2.9}
  \frac{1}{2}\frac{d}{dt}\|\zeta\|_{L^{2}}^{2}+\|\nabla
  \zeta\|_{L^{2}}^{2}+\int_{\mathbb{R}^{3}}\zeta\nabla \eta\cdot\nabla\Psi dx+\int_{\mathbb{R}^{3}}\zeta\eta^{2}
  dx=0.
\end{equation}
It can be done analogously for $\eta$, we get
\begin{equation}\label{eq2.10}
  \frac{1}{2}\frac{d}{dt}\|\eta\|_{L^{2}}^{2}+\|\nabla
  \eta\|_{L^{2}}^{2}+\int_{\mathbb{R}^{3}}\eta\nabla \zeta\cdot\nabla\Psi dx+\int_{\mathbb{R}^{3}}\zeta\eta^{2}
  dx=0.
\end{equation}
Adding \eqref{eq2.9} and \eqref{eq2.10} together, after integration
by parts, we obtain
\begin{equation}\label{eq2.11}
  \frac{1}{2}\frac{d}{dt}\Big(\|\zeta\|_{L^{2}}^{2}+\|\eta\|_{L^{2}}^{2}\Big)+\|\nabla
  \zeta\|_{L^{2}}^{2}+\|\nabla
  \eta\|_{L^{2}}^{2}+\int_{\mathbb{R}^{3}}\zeta\eta^{2}
  dx=0.
\end{equation}
Integrating \eqref{eq2.11} over $[0,t]$ for all $0<t\leq T$ implies that
\begin{align*}
  \|\zeta(t)\|_{L^{2}}^{2}+\|\eta(t)\|_{L^{2}}^{2}&+2\int_{0}^{t}\Big(\|\nabla
  \zeta(\tau)\|_{L^{2}}^{2}+\|\nabla
  \eta(\tau)\|_{L^{2}}^{2}\Big)d\tau\\
  &+2\int_{0}^{t}\int_{\mathbb{R}^{3}}\zeta(x,\tau)\eta^{2}(x,\tau)
  dxd\tau=\|\zeta_{0}\|_{L^{2}}^{2}+\|\eta_{0}\|_{L^{2}}^{2},
\end{align*}
which gives us to
\begin{align}\label{eq2.12}
  \|v(t)\|_{L^{2}}^{2}&+\|w(t)\|_{L^{2}}^{2}+2\int_{0}^{t}\Big(\|\nabla
  v(\tau)\|_{L^{2}}^{2}+\|\nabla
  w(\tau)\|_{L^{2}}^{2}\Big)d\tau\nonumber\\&+2\int_{0}^{t}\int_{\mathbb{R}^{3}}(v(x,\tau)+w(x,\tau))(v(x,\tau)-w(x,\tau))^{2}
  dxd\tau=\|v_{0}\|_{L^{2}}^{2}+\|w_{0}\|_{L^{2}}^{2}.
\end{align}
Since $v$ and $w$ are non-negative, we infer from \eqref{eq2.12} that  for all $0<t\leq T$,
\begin{align*}
  \|v(t)\|_{L^{2}}^{2}+\|w(t)\|_{L^{2}}^{2}+2\int_{0}^{t}\Big(\|\nabla
  v(\tau)\|_{L^{2}}^{2}+\|\nabla
  w(\tau)\|_{L^{2}}^{2}\Big)d\tau\leq \|v_{0}\|_{L^{2}}^{2}+\|w_{0}\|_{L^{2}}^{2}.
\end{align*}
This proves  \eqref{eq2.6}.

To derive $L^{2}$ bound of $u$, multiplying
 the first equation of \eqref{eq1.1} by $u$, after integration by
parts, by the fifth equation of \eqref{eq1.1}, it can be easily seen
that
\begin{align}\label{eq2.13}
  \frac{1}{2}\frac{d}{dt}\|u\|_{L^{2}}^{2}+\|\nabla
  u\|_{L^{2}}^{2}=\int_{\mathbb{R}^{3}}(v-w)u\cdot\nabla\Psi dx.
\end{align}
On the other hand, we multiply, respectively,  the third and fourth equations of  \eqref{eq1.1} by
$\Psi$ to yield that
\begin{align}\label{eq2.14}
  &\int_{\mathbb{R}^{3}}\Big(\partial_{t}v\Psi+\nabla\cdot(v\nabla\Psi)\Psi-\Delta
  v\Psi+(u\cdot\nabla)
  v\Psi\Big)dx=0,\\
\label{eq2.15}
  &\int_{\mathbb{R}^{3}}\Big(\partial_{t}w\Psi-\nabla\cdot(w\nabla\Psi)\Psi-\Delta
  w\Psi+(u\cdot\nabla)
  w\Psi\Big)dx=0.
\end{align}
Subtracting \eqref{eq2.15} from \eqref{eq2.14} leads to
\begin{equation*}
  \int_{\mathbb{R}^{3}}\Big(\partial_{t}(v-w)\Psi+\nabla\cdot((v+w)\nabla\Psi)\Psi-\Delta (v-w)\Psi
  +(u\cdot\nabla)
  (v-w)\Psi\Big)dx=0,
\end{equation*}
which, after integration by part and using the fifth equation of \eqref{eq1.1} again, gives us to
\begin{equation}\label{eq2.16}
  \frac{1}{2}\frac{d}{dt}\|\nabla\Psi\|_{L^{2}}^{2}+\int_{\mathbb{R}^{3}}(v+w)|\nabla\Psi|^{2}dx+\int_{\mathbb{R}^{3}}|\Delta\Psi|^{2}dx
  +\int_{\mathbb{R}^{3}}(v-w)u\cdot\nabla\Psi dx
  =0.
\end{equation}
Adding \eqref{eq2.13} and \eqref{eq2.16} together, we find that
\begin{equation*}
   \frac{1}{2}\frac{d}{dt}\Big(\|u\|_{L^{2}}^{2}+\|\nabla\Psi\|_{L^{2}}^{2}\Big)+\|\nabla
  u\|_{L^{2}}^{2}+\|\Delta\Psi\|_{L^{2}}^{2}
  +\int_{\mathbb{R}^{3}}(v+w)|\nabla\Psi|^{2}dx=0.
\end{equation*}
Observing that $v$ and $w$ are non-negative, thus we have
\begin{equation}\label{eq2.17}
   \frac{1}{2}\frac{d}{dt}\Big(\|u\|_{L^{2}}^{2}+\|\nabla\Psi\|_{L^{2}}^{2}\Big)+\|\nabla
  u\|_{L^{2}}^{2}+\|\Delta\Psi\|_{L^{2}}^{2}
  \leq0.
\end{equation}
Integrating \eqref{eq2.17} in the time interval $[0,t]$ for any $0<t\leq T$, one obtains that
\begin{equation}\label{eq2.18}
  \|u(t)\|_{L^{2}}^{2}+\|\nabla\Psi(t)\|_{L^{2}}^{2}+2\int_{0}^{t}\Big(\|\nabla
  u(\tau)\|_{L^{2}}^{2}+\|\Delta\Psi(\tau)\|_{L^{2}}^{2}\Big)d\tau\leq
  \|u_{0}\|_{L^{2}}^2+\|\nabla\Psi_{0}\|_{L^{2}}^{2},
\end{equation}
where $\Psi_{0}$ is determined by
$v_{0}$ and $w_{0}$ through the equation $\Delta\Psi_{0}=v_{0}-w_{0}$. By using the Sobolev
embedding relation $\dot{H}^{1}(\mathbb{R}^{3})\hookrightarrow
L^{6}(\mathbb{R}^{3})$, the Young inequality and the
interpolation inequality, we obtain that
\begin{align*}
  \|\nabla\Psi_{0}\|_{L^{2}}^{2}&\leq
  \|\Psi_{0}\|_{L^{6}}\|\Delta\Psi_{0}\|_{L^{\frac{6}{5}}}\leq \|\Psi_{0}\|_{L^{6}}\|(v_{0},
  w_{0})\|_{L^{\frac{6}{5}}}\\
  &\leq C\|\nabla\Psi_{0}\|_{L^{2}}\|(v_{0}, w_{0})\|_{L^{\frac{6}{5}}}
  \leq \frac{1}{2}\|\nabla\Psi_{0}\|_{L^{2}}^{2}+C\|(v_{0}, w_{0})\|_{L^{\frac{6}{5}}}^{2}\\
  &\leq \frac{1}{2}\|\nabla\Psi_{0}\|_{L^{2}}^{2}+C\|(v_{0}, w_{0})\|_{L^{1}}^{\frac{4}{3}}\|(v_{0},
  w_{0})\|_{L^{2}}^{\frac{2}{3}}.
\end{align*}
Hence,  for all $0<t\leq T$, we obtain the
following $L^{2}$ bound of $u$ and $\nabla\Psi$:
\begin{equation}\label{eq2.19}
  \|u(t)\|_{L^{2}}^{2}+\|\nabla\Psi(t)\|_{L^{2}}^{2}+2\int_{0}^{t}\big(\|\nabla
  u(\tau)\|_{L^{2}}^{2}+\|\Delta\Psi(\tau)\|_{L^{2}}^{2}\big)d\tau\leq
  \|u_{0}\|_{L^{2}}^{2}+C_{1},
\end{equation}
where $C_{1}$ is a constant depending only on $\|(v_{0},
w_{0})\|_{L^{1}\cap L^{2}}$.

Finally, we establish the desired estimate for $\omega$. Taking $\nabla\times$  on the first equation
of \eqref{eq1.1}, we see that
\begin{equation}\label{eq2.20}
  \partial_{t}\omega+(u\cdot\nabla)\omega-\Delta \omega=(\omega\cdot\nabla)u+\nabla\times(\Delta \Psi\nabla \Psi).
\end{equation}
Multiplying \eqref{eq2.20} by $\omega$, and integrating over $\mathbb{R}^{3}$, one has
\begin{align}\label{eq2.21}
  \frac{1}{2}\frac{d}{dt}\|\omega\|_{L^{2}}^{2}+\|\nabla \omega\|_{L^{2}}^{2}&=\int_{\mathbb{R}^{3}}(\omega\cdot\nabla)u\cdot \omega dx-\int_{\mathbb{R}^{3}}(\Delta\Psi\nabla \Psi)\cdot(\nabla\times \omega)dx\nonumber\\
  &\overset{\operatorname{def}}{=}I_{1}+I_{2}.
\end{align}
By the Biot-Savart law, we have the
following two equalities:
\begin{align*}
  \frac{\partial u}{\partial x_{j}}=R_{j}(R\times\omega) \ \
  \text{for}\ \ j=1,2,3,
\end{align*}
and
\begin{align*}
  \nabla\times \omega=\nabla\times(\nabla\times u)=\nabla(\nabla\cdot u)-\Delta u
  =-\Delta u,
\end{align*}
where $R=(R_{1}, R_{2}, R_{3})$, $R_{j}=\frac{\partial}{\partial
x_{j}}(-\Delta)^{-\frac{1}{2}}$ ($j=1,2,3$) are the Riesz transforms.
Since the Riesz operators are bounded in $L^{2}(\mathbb{R}^3)$, we have
$$
  \|\nabla u\|_{L^{2}}\leq C\|\omega\|_{L^{2}}\ \ \text{and}\ \  \|\Delta u\|_{L^{2}}\leq C\|\nabla\omega\|_{L^{2}}.
$$
This implies that
\begin{equation}\label{eq2.22}
  I_{1}\leq\Big{|}\int_{\mathbb{R}^{3}}(\omega\cdot\nabla) u\cdot\omega dx\Big{|}\leq
  \|\omega\|_{L^{\infty}}\|\nabla u\|_{L^{2}}\|\omega\|_{L^{2}}\leq
  C\|\omega\|_{L^{\infty}}\|\omega\|_{L^{2}}^{2}.
\end{equation}
Moreover, by using the fifth equation of \eqref{eq1.1},  \eqref{eq2.6} and \eqref{eq2.19},  we bound $I_{2}$ as
\begin{align}\label{eq2.23}
  I_{2}  &\leq C \|\nabla \Psi\|_{L^{4}}\|\Delta \Psi\|_{L^{4}}\|\Delta u\|_{L^{2}}\nonumber\\
  &\leq \frac{1}{2}\|\Delta u\|_{L^{2}}^{2}+C\|(v,w)\|_{L^{4}}^{2}\|\nabla \Psi\|_{L^{4}}^{2}\nonumber\\
  &\leq \frac{1}{2}\|\nabla \omega\|_{L^{2}}^{2}+C\|(v,w)\|_{L^{2}}^{\frac{1}{2}}\|(\nabla v,\nabla w)\|_{L^{2}}^{\frac{3}{2}}\|\nabla\Psi\|_{L^{2}}^{\frac{1}{2}}\|\Delta\Psi\|_{L^{2}}^{\frac{3}{2}}\nonumber\\
  &\leq \frac{1}{2}\|\nabla \omega\|_{L^{2}}^{2}+C\|(v,w)\|_{L^{2}}^{2}\|(\nabla v,\nabla w)\|_{L^{2}}^{\frac{3}{2}}\|\nabla\Psi\|_{L^{2}}^{\frac{1}{2}}\nonumber\\
  &\leq \frac{1}{2}\|\nabla \omega\|_{L^{2}}^{2}+C\|(v,w)\|_{L^{2}}^{2}\|(\nabla v,\nabla w)\|_{L^{2}}^{2}+C\|(v,w)\|_{L^{2}}^{2}\|\nabla\Psi\|_{L^{2}}^{2}\nonumber\\
   &\leq \frac{1}{2}\|\nabla \omega\|_{L^{2}}^{2}+C(\|(\nabla v,\nabla w)\|_{L^{2}}^{2}+1),
\end{align}
where we have used the
following interpolation inequality:
\begin{equation*}
  \|f\|_{L^{4}}\leq \|f\|_{L^{2}}^{\frac{1}{4}}\|
  \nabla f\|_{L^{2}}^{\frac{3}{4}}.
\end{equation*}
Plugging \eqref{eq2.22} and \eqref{eq2.23} into \eqref{eq2.21}, we
obtain
\begin{equation}\label{eq2.24}
  \frac{d}{dt}\|\omega\|_{L^{2}}^{2}+\|\nabla\omega\|_{L^{2}}^{2}\leq
  C\Big(\|\omega\|_{L^{\infty}}+\|(\nabla v,\nabla w)\|_{L^{2}}^{2}+1\Big)\Big(\|\omega\|_{L^{2}}^{2}+1\Big).
\end{equation}
Applying the Gronwall's inequality, we conclude that for all $0<t\leq T$,
\begin{align}\label{eq2.25}
  \|\omega(t)\|_{L^{2}}^{2}+\int_{0}^{t}\|\nabla\omega(\tau)\|_{L^{2}}^{2}d\tau\leq
  C_{2}\exp\Big\{C\int_{0}^{t}\big(\|\omega(\tau)\|_{L^{\infty}}+\|(\nabla v(\tau),\nabla w(\tau))\|_{L^{2}}^{2}+1\big)d\tau\Big\},
\end{align}
where
$C_{2}=\|\omega_{0}\|_{L^{2}}^{2}+1$.
By \eqref{eq2.5}, \eqref{eq2.6} and \eqref{eq2.25}, we get
\eqref{eq2.7}. The proof of Lemma \ref{le2.3} is complete.
\end{proof}

\medskip

Now, let us derive higher order derivative bounds of the solution
$(u,v,w)$. Taking $\nabla\Delta$ on the first equation of \eqref{eq1.1}, multiplying the
resulting with $\nabla\Delta u$ and integrating over $\mathbb{R}^{3}$, observing
that the pressure $\Pi$ can be eliminated by the condition
$\nabla\cdot u=0$, one obtains
\begin{align}\label{eq2.26}
  \frac{1}{2}\frac{d}{dt}\|\nabla\Delta
  u\|_{L^{2}}^{2}+\|\Delta^{2}u\|_{L^{2}}^{2}&=-\int_{\mathbb{R}^{3}}\nabla\Delta((u\cdot\nabla)
  u)\cdot\nabla\Delta u dx+\int_{\mathbb{R}^{3}}\nabla\Delta(\Delta\Psi\nabla\Psi)\cdot\nabla\Delta u dx\nonumber\\
  &\stackrel{def}{=}II_{1}+II_{2}.
\end{align}
Since $\nabla\cdot u=0$, it follows from Lemma \ref{le2.1} that
\begin{align}\label{eq2.27}
  II_{1}&=-\int_{\mathbb{R}^{3}}\big[\nabla\Delta((u\cdot\nabla)
  u)-(u\cdot\nabla)\nabla\Delta u\big]\cdot\nabla\Delta u dx\nonumber\\
  &\leq C\|\nabla\Delta((u\cdot\nabla)
  u)-(u\cdot\nabla)\nabla\Delta u\|_{L^{2}}\|\nabla\Delta
  u\|_{L^{2}}\nonumber\\
  &\leq C\|\nabla u\|_{L^{\infty}}\|\nabla\Delta
  u\|_{L^{2}}^{2}.
\end{align}
For $II_{2}$, by using the fifth equation of \eqref{eq1.1}, \eqref{eq2.6}, \eqref{eq2.7} and \eqref{eq2.19}, one has
\begin{align}\label{eq2.28}
  II_{2}&=-\int_{\mathbb{R}^{3}}\Delta((v-w)\nabla\Psi)\cdot\Delta^{2}u dx\nonumber\\
  &\leq \frac{1}{2}\|\Delta^{2}u\|_{L^{2}}^{2}+C\|\Delta((v-w)\nabla\Psi)\|_{L^{2}}^{2}\nonumber\\
  &\leq \frac{1}{2}\|\Delta^{2}u\|_{L^{2}}^{2}+C\Big(\|(\Delta v-\Delta w)\nabla\Psi\|_{L^{2}}^{2}+2\|(\nabla v-\nabla w)\nabla^{2}\Psi\|_{L^{2}}^{2}+\|(v-w)\nabla\Delta\Psi\|_{L^{2}}^{2}\Big)\nonumber\\
  &\leq \frac{1}{2}\|\Delta^{2}u\|_{L^{2}}^{2}+C\Big(\|(\Delta v, \Delta w)\|_{L^{3}}^{2}\|\nabla\Psi\|_{L^{6}}^{2}+\|(v,w)\|_{L^{3}}^{2}\|(\nabla v,\nabla w)\|_{L^{6}}^{2}\Big)\nonumber\\
  &\leq \frac{1}{2}\|\Delta^{2}u\|_{L^{2}}^{2}+C\Big(\|(\Delta v, \Delta w)\|_{L^{2}}\|(\nabla\Delta v, \nabla\Delta w)\|_{L^{2}}+\|(\nabla v,\nabla w)\|_{L^{2}}\|(\Delta v, \Delta w)\|_{L^{2}}^{2}\Big)\nonumber\\
  &\leq \frac{1}{2}\|\Delta^{2}u\|_{L^{2}}^{2}+\frac{1}{6}\|(\nabla\Delta v, \nabla\Delta w)\|_{L^{2}}^{2}+C\Big(1+\|(\nabla v, \nabla w)\|_{L^{2}}^{2}\Big)\|(\Delta v, \Delta w)\|_{L^{2}}^{2},
\end{align}
where we have used the interpolation inequality:
\begin{equation*}
  \|f\|_{L^{3}}\leq \|f\|_{L^{2}}^{\frac{1}{2}}\|
  \nabla f\|_{L^{2}}^{\frac{1}{2}},
\end{equation*}
and the following Hardy-Littlewood-Sobolev inequality with $p=2$:
\begin{equation*}
  \|\nabla(-\Delta)^{-1}f\|_{L^{\frac{3p}{3-p}}}\leq
  C\|f\|_{L^{p}} \ \ \text{for any}\ \  1<p<3.
\end{equation*}
Putting \eqref{eq2.27} and \eqref{eq2.28} together, we deduce that
\begin{align}\label{eq2.29}
  \frac{d}{dt}\|\nabla\Delta
  u\|_{L^{2}}^{2}&+\|\Delta^{2}u\|_{L^{2}}^{2}\leq \frac{1}{3}\|(\nabla\Delta v, \nabla\Delta w)\|_{L^{2}}^{2}\nonumber\\
  &+C\Big(1+\|\nabla u\|_{L^{\infty}}+\|(\nabla v, \nabla w)\|_{L^{2}}^{2}\Big)\Big(\|\nabla\Delta
  u\|_{L^{2}}^{2}+\|(\Delta v, \Delta w)\|_{L^{2}}^{2}\Big).
\end{align}
Taking $\Delta$ to the third equation of \eqref{eq1.1}, then multiplying $\Delta v$, after integration by parts, we see that
\begin{align}\label{eq2.30}
  \frac{1}{2}\frac{d}{dt}\|\Delta
  v\|_{L^{2}}^{2}+\|\nabla\Delta v\|_{L^{2}}^{2}&=-\int_{\mathbb{R}^{3}}\Delta((u\cdot\nabla)
  v)\Delta v dx-\int_{\mathbb{R}^{3}}\Delta\nabla\cdot(v\nabla\Psi)\Delta v dx\nonumber\\
  &\stackrel{def}{=}III_{1}+III_{2}.
\end{align}
Applying Lemma \ref{le2.3}, the terms $III_{1}$ and $III_{2}$ can be estimated as follows:
\begin{align*}
  III_{1}&=\int_{\mathbb{R}^3}\nabla((u\cdot\nabla) v)\cdot\nabla\Delta vdx\nonumber\\
  &\leq \frac{1}{8}\|\nabla\Delta v\|_{L^{2}}^{2}+C\big(\|(\nabla u\cdot\nabla)v\|_{L^{2}}^{2}+\|(u\cdot\nabla)\nabla v\|_{L^{2}}^{2}\big)\nonumber\\
  &\leq \frac{1}{8}\|\nabla\Delta v\|_{L^{2}}^{2}+C\big(\|\nabla u\|_{L^{\infty}}^{2}\|\nabla v\|_{L^{2}}^{2}+\|u\|_{L^{6}}^{2}\|\Delta v\|_{L^{3}}^{2}\big)\nonumber\\
  &\leq \frac{1}{8}\|\nabla\Delta v\|_{L^{2}}^{2}+C\big(\|\nabla v\|_{L^{2}}^{2}(1+\|\nabla \Delta u\|_{L^{2}}^{2})+\|\nabla u\|_{L^{2}}^{2}\|\Delta v\|_{L^{2}}\|\nabla\Delta v\|_{L^{2}}\big)\nonumber\\
  &\leq \frac{1}{6}\|\nabla\Delta v\|_{L^{2}}^{2}+C\big(1+\|\nabla v\|_{L^{2}}^{2}\big)\big(1+\|\nabla \Delta u\|_{L^{2}}^{2}+\|\Delta v\|_{L^{2}}^{2}\big)
\end{align*}
and
\begin{align*}
  III_{2}&=\int_{\mathbb{R}^3}\Delta(v\nabla \Psi)\cdot\nabla\Delta vdx\nonumber\\
  &\leq \frac{1}{8}\|\nabla\Delta v\|_{L^{2}}^{2}+C\big(\|\Delta v\nabla\Psi\|_{L^{2}}^{2}+\|\nabla v\nabla^{2}\Psi\|_{L^{2}}^{2}+\|v\nabla \Delta\Psi)\|_{L^{2}}^{2}\big)\nonumber\\
  &\leq \frac{1}{8}\|\nabla\Delta v\|_{L^{2}}^{2}+C\big(\|\Delta v\|_{L^{3}}^{2}\|\nabla \Psi\|_{L^{6}}^{2}+\|\nabla v\|_{L^{6}}^{2}\|\nabla^{2} \Psi\|_{L^{3}}^{2}+\|v\|_{L^{3}}^{2}\|\nabla(v-w)\|_{L^{6}}^{2}\big)\nonumber\\
  &\leq \frac{1}{6}\|\nabla\Delta v\|_{L^{2}}^{2}+C\big(1+\|(\nabla v, \nabla w)\|_{L^{2}}^{2}\big)\big(1+\|(\Delta v, \Delta w)\|_{L^{2}}^{2}\big),
\end{align*}
which give us to
\begin{align}\label{eq2.31}
  \frac{d}{dt}\|\Delta
  v\|_{L^{2}}^{2}+\frac{4}{3}\|\nabla\Delta v\|_{L^{2}}^{2}\leq C\big(1+\|(\nabla v, \nabla w)\|_{L^{2}}^{2}\big)\big(1+\|\nabla \Delta u\|_{L^{2}}^{2}+\|(\Delta v, \Delta w)\|_{L^{2}}^{2}\big).
\end{align}
It can be done analogous for $w$, thus we get
\begin{align}\label{eq2.32}
  \frac{d}{dt}\|\Delta
  w\|_{L^{2}}^{2}+\frac{4}{3}\|\nabla\Delta w\|_{L^{2}}^{2}\leq C\big(1+\|(\nabla v, \nabla w)\|_{L^{2}}^{2}\big)\big(1+\|\nabla \Delta u\|_{L^{2}}^{2}+\|(\Delta v, \Delta w)\|_{L^{2}}^{2}\big).
\end{align}

In the final step, we complete the proof of Theorem \ref{th1.2} by applying the logarithmical Sobolev inequality \eqref{eq2.2} and the
Gronwall's inequality. Set
$$
  Y(t)=e+\|u(t)\|_{H^{3}}^{2}+\|(v(t),w(t))\|_{H^{2}}^{2}.
$$
Then by  \eqref{eq2.6},  \eqref{eq2.19}, \eqref{eq2.29}, \eqref{eq2.31} and \eqref{eq2.32}, we see that
\begin{equation}\label{eq2.33}
  \frac{dY(t)}{dt}\leq C\Big(1+\|\nabla u\|_{L^{\infty}}+\|(\nabla v, \nabla w)\|_{L^{2}}^{2}\Big)Y(t).
\end{equation}
On the other hand,  using \eqref{eq2.7} and the logarithmic Sobolev inequality in
Lemma \ref{le2.2}, we get
\begin{equation}\label{eq2.34}
  \|\nabla u\|_{L^{\infty}}\leq
  C\big(1+\|\omega\|_{L^{\infty}}\ln(e+\|u\|_{H^{3}})\big).
\end{equation}
Combining \eqref{eq2.33}, \eqref{eq2.34} with the fact $\ln m(t)\geq
1$ yield that
\begin{align}\label{eq2.35}
  \frac{d}{dt}\ln Y(t)\leq C\Big(1+\|\omega\|_{L^{\infty}}+\|(\nabla v, \nabla w)\|_{L^{2}}^{2}\Big)\ln Y(t).
\end{align}
By applying the Gronwall's inequality, we obtain
\begin{align}\label{eq2.36}
  Y(t)\leq \exp\Big\{\ln Y(0)\exp\big(C(1+t)+C\int_{0}^{t}\|\omega(\tau)\|_{L^{\infty}}d\tau\big)\Big\}
\end{align}
for any $0<t\leq T_{*}$, where $Y(0)=e+\|u_{0}\|_{H^{3}}^{2}+\|(v_0,w_0)\|_{H^{2}}^{2}$.
Hence, by \eqref{eq2.36}, we obtain
$$
  \sup_{0\leq t\leq T_{*}}\big(\|u(\cdot,t)\|_{H^{3}}+\|(v(\cdot,t),w(\cdot,t))\|_{H^{2}}\big)\leq
  C_0,
$$
where $C_{0}$ is a constant depending only on $\|u_0\|_{H^{3}}$,
$\|(v_0,w_0)\|_{L^{1}\cap H^{2}}$, $T_{*}$  and $K$.
The proof of Theorem \ref{th1.2} is complete.

\section{Proof of Theorem \ref{th1.3}}

In this section we prove Theorem \ref{th1.3}. Based on the Beale-Kato-Majda's criterion \eqref{eq1.9} in Theorem \ref{th1.2}, we need only to show that if one of the conditions  \eqref{eq1.10} and \eqref{eq1.11} does not hold, then there exists a constant $C$ such that
\begin{equation}\label{eq3.1}
  \int_{0}^{T_{*}}\|u(\cdot,t)\|_{H^{3}}dt\leq C.
\end{equation}
 By the Sobolev embedding  $H^{2}(\mathbb{R}^{3})\rightarrow L^{\infty}(\mathbb{R}^{3})$, \eqref{eq3.1} gives us to the fact
$\int_{0}^{T_{*}}\|\omega(\cdot,t)\|_{L^{\infty}}dt\leq C$, which completes the proof of Theorem \ref{th1.3} by the Beale-Kato-Majda's criterion \eqref{eq1.9} in Theorem \ref{th1.2}.

\begin{lemma}\label{le3.1}
Assume that $u_{0}\in H^{1}(\mathbb{R}^3)$ with $\nabla\cdot u_{0}=0$, $(v_{0},w_{0})\in L^{1}(\mathbb{R}^3)\cap L^{2}(\mathbb{R}^3)$  and $v_{0}, w_{0}\geq0$. Let $(u,v,w)$ be the
corresponding local smooth solution to the system
\eqref{eq1.1} on $[0,T)$ for some $0<T<\infty$. If
there exists a constant $K>0$ such that one of the following conditions holds:
\begin{align}\label{eq3.2}
   \int_{0}^{T}\|u(\cdot,t)\|_{L^{p}}^{q}dt\leq K \ \ \ \text{ with
  }\ \ \ \frac{2}{q}+\frac{3}{p}= 1\ \ \ \text{and}\ \ \ 3<p\leq \infty,
\end{align}
\begin{align}\label{eq3.3}
   \int_{0}^{T}\|\nabla u(\cdot,t)\|_{L^{p}}^{q}dt\leq K \ \ \ \text{ with
  }\ \ \ \frac{2}{q}+\frac{3}{p}= 2\ \ \ \text{and}\ \ \ \frac{3}{2}<p\leq \infty,
\end{align}
then we have
\begin{align}\label{eq3.4}
  \sup_{0\leq t\leq T}\|\nabla u(\cdot,t)\|_{L^{2}}^{2}+\int_{0}^{T}\|\Delta u(\cdot,t)\|_{L^{2}}^{2}dt\leq C_{3},
\end{align}
where $C_{3}$ is a constant depending only on $\|u_{0}\|_{H^{1}}$, $\|(v_{0},w_{0})\|_{L^{1}\cap L^{2}}$, $T$  and $K$.
\end{lemma}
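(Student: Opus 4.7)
The plan is to carry out an $\dot H^1$ energy estimate for $u$ by testing the first equation of \eqref{eq1.1} against $-\Delta u$, and then use the Prodi--Serrin type hypothesis to absorb the convective nonlinearity. A preliminary observation is that the $L^2$ bounds \eqref{eq2.6} for $(v,w)$ and \eqref{eq2.19} for $(u,\nabla\Psi)$ established in the proof of Lemma \ref{le2.3} are pure energy estimates, independent of the vorticity hypothesis \eqref{eq2.5}; only estimate \eqref{eq2.7} actually used that assumption. Hence under the hypotheses of Lemma \ref{le3.1} we already have at our disposal
$$
\sup_{0\le t\le T}\bigl(\|u\|_{L^2}^2+\|(v,w)\|_{L^2}^2+\|\nabla\Psi\|_{L^2}^2\bigr)+\int_0^T\bigl(\|\nabla u\|_{L^2}^2+\|\nabla(v,w)\|_{L^2}^2+\|\Delta\Psi\|_{L^2}^2\bigr)d\tau\le C.
$$
Testing the Navier--Stokes equation against $-\Delta u$ (the pressure drops out by $\nabla\cdot u=0$) then produces
$$
\frac12\frac{d}{dt}\|\nabla u\|_{L^2}^2+\|\Delta u\|_{L^2}^2=\int_{\mathbb{R}^3}(u\cdot\nabla)u\cdot\Delta u\,dx-\int_{\mathbb{R}^3}\Delta\Psi\,\nabla\Psi\cdot\Delta u\,dx\stackrel{def}{=}J_1+J_2.
$$

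The forcing term $J_2$ is the easier one. Using $\Delta\Psi=v-w$, H\"older's inequality, the interpolation $\|f\|_{L^3}\le\|f\|_{L^2}^{1/2}\|\nabla f\|_{L^2}^{1/2}$, the Sobolev embedding $\dot H^1(\mathbb{R}^3)\hookrightarrow L^6(\mathbb{R}^3)$, and the Poisson equation again, I would bound
$$
|J_2|\le\tfrac14\|\Delta u\|_{L^2}^2+C\|(v,w)\|_{L^3}^2\|\nabla\Psi\|_{L^6}^2\le\tfrac14\|\Delta u\|_{L^2}^2+C\|(v,w)\|_{L^2}^3\|(\nabla v,\nabla w)\|_{L^2},
$$
and the remaining factor is $t$-integrable by the preliminary bounds above.

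The heart of the matter is $J_1$, which must be handled separately for each hypothesis. Under \eqref{eq3.2}, H\"older with exponents $p,\ \tfrac{2p}{p-2},\ 2$, the Gagliardo--Nirenberg inequality $\|\nabla u\|_{L^{2p/(p-2)}}\le C\|\nabla u\|_{L^2}^{(p-3)/p}\|\Delta u\|_{L^2}^{3/p}$, and Young's inequality give
$$
|J_1|\le\tfrac14\|\Delta u\|_{L^2}^2+C\|u\|_{L^p}^{q}\|\nabla u\|_{L^2}^2,\qquad q=\tfrac{2p}{p-3}.
$$
Under \eqref{eq3.3} I would first integrate by parts using $\nabla\cdot u=0$ to rewrite $J_1=-\int\partial_iu^j\,\partial_ju^k\,\partial_iu^k\,dx$, so that $|J_1|\le C\|\nabla u\|_{L^p}\|\nabla u\|_{L^{2p/(p-1)}}^2$; the interpolation $\|\nabla u\|_{L^{2p/(p-1)}}\le C\|\nabla u\|_{L^2}^{(2p-3)/(2p)}\|\Delta u\|_{L^2}^{3/(2p)}$ followed by Young's inequality yields
$$
|J_1|\le\tfrac14\|\Delta u\|_{L^2}^2+C\|\nabla u\|_{L^p}^{q}\|\nabla u\|_{L^2}^2,\qquad q=\tfrac{2p}{2p-3}.
$$
The endpoints $p=\infty$ (producing $q=2$ and $q=1$ respectively) need only H\"older's inequality directly, without any interpolation step.

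Assembling everything produces a differential inequality of the form
$$
\frac{d}{dt}\|\nabla u\|_{L^2}^2+\|\Delta u\|_{L^2}^2\le C\bigl(1+\phi(t)+\|(v,w)\|_{L^2}^3\|(\nabla v,\nabla w)\|_{L^2}\bigr)\bigl(1+\|\nabla u\|_{L^2}^2\bigr),
$$
where $\phi(t)=\|u(t)\|_{L^p}^{q}$ under \eqref{eq3.2} and $\phi(t)=\|\nabla u(t)\|_{L^p}^{q}$ under \eqref{eq3.3}. Because $\int_0^T\phi(t)\,dt\le K$ by assumption while the remaining $(v,w)$-term is $t$-integrable thanks to the preliminary bounds, Gronwall's inequality delivers \eqref{eq3.4}. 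The one genuinely delicate point is the Gagliardo--Nirenberg exponent bookkeeping for $J_1$, which is precisely what forces the scaling conditions $2/q+3/p=1$ and $2/q+3/p=2$ to emerge; everything else is routine.
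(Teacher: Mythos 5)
Your proposal is correct and follows essentially the same route as the paper: the same $\dot{H}^1$ energy estimate obtained by testing against $\Delta u$, the same splitting into $J_1+J_2$, the same Gagliardo--Nirenberg interpolations yielding the exponents $q=\tfrac{2p}{p-3}$ and $\tilde{q}=\tfrac{2p}{2p-3}$ under \eqref{eq3.2} and \eqref{eq3.3} respectively, and the same Gronwall closure using the a priori bounds \eqref{eq2.6} and \eqref{eq2.19}. Your explicit remark that these preliminary $L^2$ bounds do not depend on the vorticity hypothesis \eqref{eq2.5} is a point the paper leaves implicit (it invokes estimates from Lemma \ref{le2.3}, whose statement formally assumes \eqref{eq2.5}), and your $L^3\times L^6$ H\"{o}lder splitting of $J_2$ is a harmless variant of the paper's $L^4\times L^4$ one.
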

\begin{proof}
 Multiplying the first equation of \eqref{eq1.1} by $\Delta u$,  after integration by parts, we deduce that
\begin{align}\label{eq3.5}
  \frac{1}{2}\frac{d}{dt}\|\nabla u\|_{L^{2}}^{2}+\|\Delta u\|_{L^{2}}^{2}=-\int_{\mathbb{R}^{3}}u\cdot\nabla u\cdot\Delta udx+\int_{\mathbb{R}^{3}}\Delta\Psi\nabla\Psi\cdot\Delta udx\stackrel{def}{=}J_{1}+J_{2}.
\end{align}
Under the assumption  \eqref{eq3.2}, using the H\"{o}lder inequality, $J_{1}$ can be estimated as
\begin{align}\label{eq3.6}
  J_{1}&\leq C \|u\|_{L^{p}}\|\nabla u\|_{L^{\frac{2p}{p-2}}}\|\Delta u\|_{L^{2}}
  \leq C \|u\|_{L^{p}}\|\nabla u\|_{L^{2}}^{1-\frac{3}{p}}
  \|\Delta u\|_{L^{2}}^{1+\frac{3}{p}}\nonumber\\
  &\leq \frac{1}{4}\|\Delta u\|_{L^{2}}^{2}+C \|u\|_{L^{p}}^{q}\|\nabla u\|_{L^{2}}^{2},
\end{align}
where $q=\frac{2p}{p-3}$, while under the assumption \eqref{eq3.3}, $J_{1}$ can be done as
\begin{align}\label{eq3.7}
  J_{1}&=\sum_{i,k=1}^{3}\int_{\mathbb{R}^{3}}\partial_{k}u^{i}\partial_{i} u\partial_{k} udx\leq C \|\nabla u\|_{L^{p}}\|\nabla u\|_{L^{\frac{2p}{p-1}}}^{2}\nonumber\\
  &\leq C \|\nabla u\|_{L^{p}}\|\nabla u\|_{L^{2}}^{2-\frac{3}{p}}
  \|\Delta u\|_{L^{2}}^{\frac{3}{p}}
  \leq \frac{1}{4}\|\Delta u\|_{L^{2}}^{2}+C \|\nabla u\|_{L^{p}}^{\widetilde{q}}\|\nabla u\|_{L^{2}}^{2},
\end{align}
where $\widetilde{q}=\frac{2p}{2p-3}$. As for $J_{2}$, by using the fifth equation of \eqref{eq1.1},  \eqref{eq2.6} and \eqref{eq2.19},  we get
\begin{align}\label{eq3.8}
 J_{2}  &\leq C \|\nabla \Psi\|_{L^{4}}\|\Delta \Psi\|_{L^{4}}\|\Delta u\|_{L^{2}}\nonumber\\
  &\leq \frac{1}{4}\|\Delta u\|_{L^{2}}^{2}+C\|(v,w)\|_{L^{4}}^{2}\|\nabla \Psi\|_{L^{4}}^{2}\nonumber\\
  &\leq \frac{1}{4}\|\Delta u\|_{L^{2}}^{2}+C\|(v,w)\|_{L^{2}}^{\frac{1}{2}}\|(\nabla v,\nabla w)\|_{L^{2}}^{\frac{3}{2}}\|\nabla\Psi\|_{L^{2}}^{\frac{1}{2}}\|\Delta\Psi\|_{L^{2}}^{\frac{3}{2}}\nonumber\\
  &\leq \frac{1}{4}\|\Delta u\|_{L^{2}}^{2}+C\|(v,w)\|_{L^{2}}^{2}\|(\nabla v,\nabla w)\|_{L^{2}}^{\frac{3}{2}}\|\nabla\Psi\|_{L^{2}}^{\frac{1}{2}}\nonumber\\
  &\leq \frac{1}{4}\|\Delta u\|_{L^{2}}^{2}+C\|(v,w)\|_{L^{2}}^{2}\|(\nabla v,\nabla w)\|_{L^{2}}^{2}+C\|(v,w)\|_{L^{2}}^{2}\|\nabla\Psi\|_{L^{2}}^{2}\nonumber\\
   &\leq \frac{1}{4}\|\Delta u\|_{L^{2}}^{2}+C(\|(\nabla v,\nabla w)\|_{L^{2}}^{2}+1).
\end{align}
Plugging \eqref{eq3.6} and \eqref{eq3.8} into \eqref{eq3.5}, we have
\begin{align}\label{eq3.9}
  \frac{d}{dt}\|\nabla u\|_{L^{2}}^{2}+\|\Delta u\|_{L^{2}}^{2}&\leq  C \|u\|_{L^{p}}^{q}\|\nabla u\|_{L^{2}}^{2}+C(\|(\nabla v,\nabla w)\|_{L^{2}}^{2}+1)\nonumber\\
  &\leq  C\left(\|u\|_{L^{p}}^{q}+\|(\nabla v,\nabla w)\|_{L^{2}}^{2}+1\right)\left(e+\|\nabla u\|_{L^{2}}^{2}\right).
\end{align}
Applying the Gronwall's inequality yields that
\begin{align*}
  \sup_{0\leq t\leq T}\|\nabla u(t)\|_{L^{2}}^{2}
  &\leq  \left(e+\|\nabla u_{0}\|_{L^{2}}^{2}\right)\exp\left\{\int_{0}^{T}\big(\|u(\tau)\|_{L^{p}}^{q}+\|(\nabla v(\tau),\nabla w(\tau))\|_{L^{2}}^{2}+1\big)d\tau\right\},
\end{align*}
which combining \eqref{eq2.6}, \eqref{eq2.19} and \eqref{eq3.9} give us to \eqref{eq3.4}.

On the other hand, plugging \eqref{eq3.7} and \eqref{eq3.8} into \eqref{eq3.5}, we have
\begin{align}\label{eq3.10}
  \frac{d}{dt}\|\nabla u\|_{L^{2}}^{2}+\|\Delta u\|_{L^{2}}^{2}
  &\leq  C\left(\|\nabla u\|_{L^{p}}^{\widetilde{q}}+\|(\nabla v,\nabla w)\|_{L^{2}}^{2}+1\right)\left(e+\|\nabla u\|_{L^{2}}^{2}\right).
\end{align}
Applying the Gronwall's inequality again yields that
\begin{align*}
  \sup_{0\leq t\leq T}\|\nabla u(t)\|_{L^{2}}^{2}
  &\leq  \left(e+\|\nabla u_{0}\|_{L^{2}}^{2}\right)\exp\left\{\int_{0}^{T}\big(\|\nabla u(\tau)\|_{L^{p}}^{\widetilde{q}}+\|(\nabla v(\tau),\nabla w(\tau))\|_{L^{2}}^{2}+1\big)d\tau\right\},
\end{align*}
which still leads to \eqref{eq3.4} by \eqref{eq2.6}, \eqref{eq2.19} and \eqref{eq3.10}. The proof of Lemma \ref{le3.1} is complete.
\end{proof}

Now we prove \eqref{eq3.1}.  Multiplying $\Delta v$ to the third equation of \eqref{eq1.1}, integrating over $\mathbb{R}^3$, we obtain
\begin{align}\label{eq3.11}
  \frac{1}{2}\frac{d}{dt}\|\nabla
  v\|_{L^{2}}^{2}+\|\Delta v\|_{L^{2}}^{2}&=-\int_{\mathbb{R}^{3}}(u\cdot\nabla)
  v\Delta v dx-\int_{\mathbb{R}^{3}}\nabla\cdot(v\nabla\Psi)\Delta v dx.
\end{align}
Applying Lemma \ref{le3.1}, the two terms in the right hand side of \eqref{eq3.10} can be estimated as follows:
\begin{align*}
  -\int_{\mathbb{R}^{3}}(u\cdot\nabla)
  v\Delta v dx&\leq \frac{1}{8}\|\Delta v\|_{L^{2}}^{2}+C\|(u\cdot\nabla) v\|_{L^{2}}^{2}\nonumber\\
  &\leq \frac{1}{8}\|\Delta v\|_{L^{2}}^{2}+C\|u\|_{L^{6}}^{2}\|\nabla v\|_{L^{3}}^{2}\nonumber\\
  &\leq \frac{1}{8}\|\Delta v\|_{L^{2}}^{2}+C\|\nabla u\|_{L^{2}}^{2}\|\nabla v\|_{L^{2}}\|\Delta v\|_{L^{2}}\nonumber\\
  &\leq \frac{1}{6}\|\Delta v\|_{L^{2}}^{2}+C\|\nabla v\|_{L^{2}}^{2},
\end{align*}
\begin{align*}
  -\int_{\mathbb{R}^{3}}\nabla\cdot(v\nabla\Psi)\Delta v dx
  &\leq \frac{1}{8}\|\Delta v\|_{L^{2}}^{2}+C\big(\|\nabla v\nabla\Psi\|_{L^{2}}^{2}+\|v\Delta\Psi\|_{L^{2}}^{2}\big)\nonumber\\
  &\leq \frac{1}{8}\|\Delta v\|_{L^{2}}^{2}+C\big(\|\nabla v\|_{L^{3}}^{2}\|\nabla \Psi\|_{L^{6}}^{2}+\|(v,w)\|_{L^{4}}^{4}\big)\nonumber\\
   &\leq \frac{1}{8}\|\Delta v\|_{L^{2}}^{2}+C\big(\|(v,w)\|_{L^{2}}^{2}\|\nabla v\|_{L^{2}}\|\Delta v\|_{L^{2}}+\|(v,w)\|_{L^{2}}^{\frac{5}{2}}\|(\Delta v,\Delta w)\|_{L^{2}}^{\frac{3}{2}}\big)\nonumber\\
  &\leq \frac{1}{6}\|(\Delta v, \Delta w)\|_{L^{2}}^{2}+C(1+\|(\nabla v, \nabla w)\|_{L^{2}}^{2}).
\end{align*}
Notice that same result also holds for $w$, thus we obtain
\begin{align}\label{eq3.12}
  \frac{d}{dt}(\|\nabla  v\|_{L^{2}}^{2}+\|\nabla  w\|_{L^{2}}^{2})+\frac{4}{3}(\|\Delta v\|_{L^{2}}^{2}+\|\Delta w\|_{L^{2}}^{2})\leq C(1+\|(\nabla v, \nabla w)\|_{L^{2}}^{2}).
\end{align}
Taking $\Delta$ on the first equation of \eqref{eq1.1}, multiplying the
resulting with $\Delta u$ and integrating over $\mathbb{R}^{3}$, by the condition
$\nabla\cdot u=0$,  we see that
\begin{align}\label{eq3.13}
  \frac{1}{2}\frac{d}{dt}\|\Delta
  u\|_{L^{2}}^{2}+\|\nabla\Delta u\|_{L^{2}}^{2}&=-\int_{\mathbb{R}^{3}}\Delta((u\cdot\nabla)
  u)\cdot\Delta u dx+\int_{\mathbb{R}^{3}}\Delta(\Delta\Psi\nabla\Psi)\cdot\Delta u dx.
\end{align}
Applying Lemma \ref{le3.1} again,  the right hand side of \eqref{eq3.13} can be bounded as follows:
\begin{align*}
  -\int_{\mathbb{R}^{3}}\Delta((u\cdot\nabla)
  u)\cdot\Delta u dx&\leq \frac{1}{8}\|\nabla \Delta u\|_{L^{2}}^{2} +C\|(\nabla u\cdot\nabla) u\|_{L^{2}}^{2}+\|(u\cdot\nabla)\nabla u\|_{L^{2}}^{2}\nonumber\\
  &\leq \frac{1}{8}\|\nabla \Delta u\|_{L^{2}}^{2} +C\|\nabla u\|_{L^{4}}^{4}+\|u\|_{L^{6}}^{2}\|\Delta u\|_{L^{3}}^{2}\nonumber\\
   &\leq \frac{1}{8}\|\nabla \Delta u\|_{L^{2}}^{2} +C\|\nabla u\|_{L^{2}}^{\frac{5}{2}}\|\nabla \Delta u\|_{L^{2}}^{\frac{3}{2}}+\|\nabla u\|_{L^{2}}^{2}\|\Delta u\|_{L^{2}}\|\nabla\Delta u\|_{L^{2}}\nonumber\\
   &\leq\frac{1}{4}\|\nabla \Delta u\|_{L^{2}}^{2} +C(\|\Delta u\|_{L^{2}}^{2}+1),
  \end{align*}
\begin{align*}
  \int_{\mathbb{R}^{3}}\Delta(\Delta\Psi\nabla\Psi)\cdot\Delta u dx&=\int_{\mathbb{R}^{3}}\nabla((v-w)\nabla\Psi)\cdot\nabla\Delta u dx\nonumber\\
  &\leq \frac{1}{4}\|\nabla\Delta u\|_{L^{2}}^{2}+C\Big(\|(\nabla v-\nabla w)\nabla\Psi\|_{L^{2}}^{2}+\|(v-w)\nabla^{2}\Psi\|_{L^{2}}^{2}\Big)\nonumber\\
  &\leq \frac{1}{4}\|\nabla\Delta u\|_{L^{2}}^{2}+C\Big(\|(\nabla v, \nabla w)\|_{L^{3}}^{2}\|\nabla\Psi\|_{L^{6}}^{2}+\|(v,w)\|_{L^{4}}^{4}\Big)\nonumber\\
  &\leq \frac{1}{4}\|\nabla\Delta u\|_{L^{2}}^{2}+ \frac{1}{6}\|(\Delta v, \Delta w)\|_{L^{2}}^{2}+C\Big(1+\|(\nabla v, \nabla w)\|_{L^{2}}^{2}\Big).
\end{align*}
Hence, we have
\begin{align}\label{eq3.14}
  \frac{d}{dt}\|\Delta
  u\|_{L^{2}}^{2}+\|\nabla\Delta u\|_{L^{2}}^{2}\leq \frac{1}{3}\|(\Delta v, \Delta w)\|_{L^{2}}^{2}+C\Big(1+\|(\nabla v, \nabla w)\|_{L^{2}}^{2}+\|\Delta
  u\|_{L^{2}}^{2}\Big).
\end{align}
Finally, we conclude from \eqref{eq3.12} and \eqref{eq3.14} that
\begin{align*}
  \frac{d}{dt}\big(\|\Delta
  u\|_{L^{2}}^{2}+\|(\nabla v, \nabla w)\|_{L^{2}}^{2}\big)&+\big(\|\nabla\Delta u\|_{L^{2}}^{2}+\|(\Delta v, \Delta w)\|_{L^{2}}^{2}\big)\nonumber\\
  &\leq C\big(1+\|(\nabla v, \nabla w)\|_{L^{2}}^{2}+\|\Delta
  u\|_{L^{2}}^{2}\big),
\end{align*}
which yields directly \eqref{eq3.1} by the Gronwall's inequality.
We complete the proof of  Theorem \ref{th1.3}.

\section{Proof of Theorem \ref{th1.4}}
In this section we prove Theorem \ref{th1.4}.  Based on the proofs of Theorems \ref{th1.2} and \ref{th1.3}, it is sufficiently to show that if there exists a  positive constant $K$ such that
\begin{align*}
  \int_{0}^{T_{*}}\|\nabla_{h} u^{h}(\cdot,t)\|_{\dot{B}^{0}_{p,\frac{2p}{3}}}^{q}dt\leq K\ \ \text{with}
  \ \ \ \frac{2}{q}+\frac{3}{p}=2\ \ \ \text{and}\ \ \ \frac{3}{2}<p\leq\infty,
\end{align*}
then \eqref{eq3.4} still holds, which gives us the desired estimate \eqref{eq3.1}. Actually, we shall establish  $L^{2}$ bounds of the vorticity $\omega=\nabla\times u$, which leads to the desired bounds of $\nabla u$ by the boundedness property of Riesz operators in $L^{2}$.
Taking $\nabla\times$  on the first equation
of \eqref{eq1.1}, we see that
\begin{equation}\label{eq4.1}
  \partial_{t}\omega+(u\cdot\nabla)\omega-\Delta \omega=(\omega\cdot\nabla)u+\nabla\times(\Delta \Psi\nabla \Psi).
\end{equation}
Multiplying \eqref{eq4.1} by $\omega$, and integrating over $\mathbb{R}^{3}$, one has
\begin{align}\label{eq4.2}
  \frac{1}{2}\frac{d}{dt}\|\omega\|_{L^{2}}^{2}+\|\nabla \omega\|_{L^{2}}^{2}&=\int_{\mathbb{R}^{3}}(\omega\cdot\nabla)u\cdot \omega dx-\int_{\mathbb{R}^{3}}(\Delta\Psi\nabla \Psi)\cdot(\nabla\times \omega)dx\nonumber\\
  &\stackrel{def}{=}K_{1}+K_{2}.
\end{align}
Notice that
\begin{align*}
  K_{1}=\sum_{i,j=1}^{3}\int_{\mathbb{R}^{3}}\omega_{i}\partial_{i}u^{j}\omega_{j} dx
  &=\sum_{i,j=1}^{2}\int_{\mathbb{R}^{3}}\omega_{i}\partial_{i}u^{j}\omega_{j} dx
  +\sum_{i=1}^{2}\int_{\mathbb{R}^{3}}\omega_{i}\partial_{i}u^{3}\omega_{3} dx\\
  &+\sum_{j=1}^{2}\int_{\mathbb{R}^{3}}\omega_{3}\partial_{3}u^{j}\omega_{j} dx
  +\int_{\mathbb{R}^{3}}\omega_{3}\partial_{3}u^{3}\omega_{3} dx.
\end{align*}
Thus, by using the facts
\begin{align*}
  \partial_{3}u^{3}=-(\partial_{1}u^{1}+\partial_{2}u^{2}), \ \ \ \omega_{3}=\partial_{1}u^{2}-\partial_{2}u^{1},
\end{align*}
we can easily see that
\begin{equation}\label{eq4.3}
  K_{1}\leq \int_{\mathbb{R}^{3}}|\nabla_{h}u^{h}||\omega|^{2}dx.
\end{equation}
Applying the Littlewood-Paley dyadic decomposition \eqref{eq1.13} for $\nabla_{h}u^{h}$, i.e.,
\begin{equation*}
  \nabla_{h}u^{h}=\sum_{j<-N}\Delta_{j}\nabla_{h}u^{h}+\sum_{j=-N}^{N}\Delta_{j}\nabla_{h}u^{h}+\sum_{j>N}\Delta_{j}\nabla_{h}u^{h},
\end{equation*}
we can split $K_{1}$ into
\begin{align}\label{eq4.4}
  K_{1}&\leq \sum_{j<-N}\int_{\mathbb{R}^{3}}|\Delta_{j}\nabla_{h}u^{h}||\omega|^{2}dx+\sum_{j=-N}^{N}\int_{\mathbb{R}^{3}}|\Delta_{j}\nabla_{h}u^{h}||\omega|^{2}dx
  +\sum_{j>N}\int_{\mathbb{R}^{3}}|\Delta_{j}\nabla_{h}u^{h}||\omega|^{2}dx\nonumber\\
  &\stackrel{def}{=}K_{11}+K_{12}+K_{13}.
\end{align}
Now we estimate the terms $K_{1j}$ ($j=1,2,3$) one by one. For $K_{11}$, by using the H\"{o}lder inequality, the Bernstein inequality \eqref{eq1.14} and the fact $\|\nabla u\|_{L^{2}}\leq C\|\omega\|_{L^{2}}$, one has
\begin{align}\label{eq4.5}
  K_{11}&=\sum_{j<-N}\int_{\mathbb{R}^{3}}|\Delta_{j}\nabla_{h}u^{h}||\omega|^{2}dx\nonumber\\
  &\leq C\sum_{j<-N}\|\Delta_{j}\nabla_{h}u^{h}\|_{L^{\infty}}\|\omega\|_{L^{2}}^{2}\nonumber\\
  &\leq C \sum_{j<-N}2^{\frac{3}{2}j}\|\Delta_{j}\nabla_{h}u^{h}\|_{L^{2}}\|\omega\|_{L^{2}}^{2}\nonumber\\
  &\leq C \Big(\sum_{j<-N}2^{3j}\Big)^{\frac{1}{2}}\Big(\sum_{j<-N}\|\Delta_{j}\nabla_{h}u^{h}\|_{L^{2}}^{2}\Big)^{\frac{1}{2}}\|\omega\|_{L^{2}}^{2}\nonumber\\
  &\leq C2^{-\frac{3}{2}N}\|\nabla u\|_{L^{2}}\|\omega\|_{L^{2}}^{2}\nonumber\\
  &\leq C2^{-\frac{3}{2}N}\|\omega\|_{L^{2}}^{3}.
\end{align}
For $K_{12}$, let $p'$ be a conjugate index of $p$, i.e., $\frac{1}{p}+\frac{1}{p'}=1$. It follows from the H\"{o}lder inequality that
\begin{align}\label{eq4.6}
  K_{12}&=\sum_{j=-N}^{N}\int_{\mathbb{R}^{3}}|\Delta_{j}\nabla_{h}u^{h}||\omega|^{2}dx\nonumber\\
  &\leq C\sum_{j=-N}^{N}\|\Delta_{j}\nabla_{h}u^{h}\|_{L^{p}}\|\omega\|_{L^{2p'}}^{2}\nonumber\\
  &\leq C\Big(\sum_{j=-N}^{N}1^{\frac{2p}{2p-3}}\Big)^{\frac{2p-3}{2p}}
  \Big(\sum_{j=-N}^{N}\|\Delta_{j}\nabla_{h}u^{h}\|_{L^{p}}^{\frac{2p}{3}}\Big)^{\frac{3}{2p}}\|\omega\|_{L^{2p'}}^{2}\nonumber\\
  &\leq CN^{\frac{2p-3}{2p}}\|\nabla_{h}u^{h}\|_{\dot{B}^{0}_{p,\frac{2p}{3}}}\|\omega\|_{L^{2p'}}^{2}\nonumber\\
  &\leq CN^{\frac{2p-3}{2p}}\|\nabla_{h}u^{h}\|_{\dot{B}^{0}_{p,\frac{2p}{3}}}\|\omega\|_{L^{2}}^{2(1-\frac{3}{2p})}\|\nabla\omega\|_{L^{2}}^{\frac{3}{p}}\nonumber\\
  &\leq \frac{1}{8}\|\nabla\omega\|_{L^{2}}^{2}+CN\|\nabla_{h}u^{h}\|_{\dot{B}^{0}_{p,\frac{2p}{3}}}^{ q}\|\omega\|_{L^{2}}^{2},
\end{align}
where $q=\frac{2p}{2p-3}$, and we have used the
following Gagliardo-Nirenberg inequality:
\begin{equation*}
  \|f\|_{L^{\frac{2p}{p-1}}}\leq \|f\|_{L^{2}}^{1-\frac{3}{2p}}\|
  \nabla f\|_{L^{2}}^{\frac{3}{2p}}\ \ \text{for}\ \ p\geq\frac{3}{2}.
\end{equation*}
For $K_{13}$, notice that
\begin{align*}
  \|\nabla^{2} u\|_{L^{2}}=\|\Delta u\|_{L^{2}}\leq C\|\nabla \omega\|_{L^{2}}.
\end{align*}
Therefore, based on the above inequality, applying the H\"{o}lder inequality,  the Bernstein inequalities \eqref{eq1.14} and \eqref{eq1.15}
again, $K_{13}$ can be estimated as
\begin{align}\label{eq4.7}
 K_{13}&=\sum_{j>N}\int_{\mathbb{R}^{3}}|\Delta_{j}\nabla_{h}u^{h}||\omega|^{2}dx\nonumber\\
  &\leq C\sum_{j>N}\|\Delta_{j}\nabla_{h}u^{h}\|_{L^{3}}\|\omega\|_{L^{2}}\|w\|_{L^{6}}\nonumber\\
  &\leq C \sum_{j>N}2^{\frac{j}{2}}\|\Delta_{j}\nabla_{h}u^{h}\|_{L^{2}}\|\omega\|_{L^{2}}\|\nabla \omega\|_{L^{2}}\nonumber\\
  &\leq C \Big(\sum_{j>N}2^{-j}\Big)^{\frac{1}{2}}\Big(\sum_{j>N}2^{2j}\|\Delta_{j}\nabla_{h}u^{h}\|_{L^{2}}^{2}\Big)^{\frac{1}{2}}\|\omega\|_{L^{2}}\|\nabla \omega\|_{L^{2}}\nonumber\\
  &\leq C2^{-\frac{N}{2}}\|\omega\|_{L^{2}}\|\nabla^{2} u\|_{L^{2}}\|\nabla \omega\|_{L^{2}}\nonumber\\
  &\leq C2^{-\frac{N}{2}}\|\omega\|_{L^{2}}\|\nabla \omega\|_{L^{2}}^{2}.
\end{align}
Plugging estimates \eqref{eq4.5}--\eqref{eq4.7} into
\eqref{eq3.4} gives us to
\begin{align}\label{eq4.8}
  K_{1}\leq \frac{1}{8}\|\nabla\omega\|_{L^{2}}^{2}+ C2^{-\frac{3}{2}N}\|\omega\|_{L^{2}}^{3}
  +CN\|\nabla_{h}u^{h}\|_{\dot{B}^{0}_{p,\frac{2p}{3}}}^{q}\|\omega\|_{L^{2}}^{2}
  +C2^{-\frac{N}{2}}\|\omega\|_{L^{2}}\|\nabla \omega\|_{L^{2}}^{2}.
\end{align}
Notice that  $K_{2}$ has been treated as $I_{2}$ in \eqref{eq2.23},
thus inserting estimates \eqref{eq4.8} and \eqref{eq2.23} into
\eqref{eq4.2}, we obtain
\begin{align}\label{eq4.9}
  \frac{d}{dt}\|\omega \|_{L^{2}}^{2}+\frac{3}{2}\|\nabla \omega\|_{L^{2}}^{2}&\leq C2^{-\frac{3}{2}N}\|\omega\|_{L^{2}}^{3}
  +CN\|\nabla_{h}u^{h}\|_{\dot{B}^{0}_{p,\frac{2p}{3}}}^{q}\|\omega\|_{L^{2}}^{2}
  +C2^{-\frac{N}{2}}\|\omega\|_{L^{2}}\|\nabla \omega\|_{L^{2}}^{2}\nonumber\\
  &
  +C(\|(\nabla v,\nabla w)\|_{L^{2}}^{2}+1).
\end{align}
Now if we choose $N$ sufficiently large such that
\begin{equation*}
   C2^{-\frac{N}{2}}\|\omega\|_{L^{2}}\leq \frac{1}{2},
\end{equation*}
i.e.,
\begin{equation*}
   N\geq \frac{\ln(\|\omega\|_{L^{2}}^{2}+e)+2\ln C}{\ln2}+2,
\end{equation*}
then the inequality \eqref{eq4.9} yields that
\begin{align}\label{eq4.10}
  \frac{d}{dt}\|\omega \|_{L^{2}}^{2}+\|\nabla \omega\|_{L^{2}}^{2}
  \leq
  C\big(1+\|\nabla_{h}u^{h}\|_{\dot{B}^{0}_{p,\frac{2p}{3}}}^{q}+\|(\nabla v,\nabla w)\|_{L^{2}}^{2}\big)
  \left(\|\omega\|_{L^{2}}^{2}+e\right)\ln\left(\|\omega\|_{L^{2}}^{2}+e\right).
\end{align}
Setting $Z(t)\stackrel{def}{=}e+\|\omega\|_{L^{2}}^{2}$. It follows from \eqref{eq4.10} that
\begin{align*}
  \frac{d}{dt}\ln Z(t)\leq
  C\big(1+\|\nabla_{h}u^{h}\|_{\dot{B}^{0}_{p,\frac{2p}{3}}}^{q}+\|(\nabla v,\nabla w)\|_{L^{2}}^{2}\big)\ln Z(t).
\end{align*}
Integrating on the time interval $[0,t]$, then taking maximum norm with respect to $t$ for any $0\leq t\leq T_{*}$, we conclude that
\begin{align*}
  \sup_{0\leq t\leq T_{*}}Z(t)\leq Z(0)\exp\Big\{\exp\big\{C\int_{0}^{T_{*}}\big(1+\|\nabla_{h}u^{h}(\tau)\|_{\dot{B}^{0}_{p,\frac{2p}{3}}}^{q}+\|(\nabla v(\tau),\nabla w(\tau))\|_{L^{2}}^{2}\big)d\tau\big\}\Big\}.
\end{align*}
Namely,
\begin{align*}
  \sup_{0\leq t\leq T_{*}}&\|\omega(t)\|_{L^{2}}^{2}\leq (e+\|\omega_{0}\|_{L^{2}}^{2})\nonumber\\
  &\times\exp\Big\{\exp\big\{CT_{*}+C\int_{0}^{T_{*}}(\|\nabla_{h}u^{h}(\tau)\|_{\dot{B}^{0}_{p,\frac{2p}{3}}}^{q}+\|(\nabla v(\tau),\nabla w(\tau))\|_{L^{2}}^{2})d\tau\big\}\Big\},
\end{align*}
 which gives us to $u\in L^{\infty}(0,T_{*}; H^{1}(\mathbb{R}^{3}))\cap L^{2}(0,T_{*}; H^{2}(\mathbb{R}^{3}))$. We complete the proof of Theorem
\ref{th1.4} .

\medskip
\medskip

\noindent\textbf{Acknowledgments.} J. Zhao is partially supported by the National Natural Science Foundation of China (11501453), the Fundamental Research Funds for the Central Universities (2014YB031) and the Fundamental Research Project of Natural Science in Shaanxi Province--Young Talent Project (2015JQ1004). M. Bai is partially supported by the Foundation for Distinguished Young Talents in Higher Education of Guangdong (2014KQNCX223). 


\begin{thebibliography}{99}

\bibitem{BCD11} H. Bahouri, J.-Y. Chemin, R. Danchin, \textit{Fourier Analysis and Nonlinear Partial Differential
Equations}, Grundlehren der mathematischen Wissenschaften, vol. 343.
Springer, Berlin, 2011.

\bibitem{BW04} P. B. Balbuena, Y. Wang,
\textit{Lithium-ion Batteries, Solid-electrolyte Interphase},
Imperial College Press, 2004.

\bibitem{BKM84} J. T. Beale, T. Kato, A. Majda, Remarks on breakdown of smooth solutions for
the 3D Euler equations, Comm. Math. Phys. 94 (1984) 61--66.

\bibitem{B95} H. Beir\~{a}o da Veiga, A new regularity class for the Navier-Stokes equations in $\mathbb{R}^{n}$, Chin. Ann.
Math., Ser. B 16 (1995) 407--412.


\bibitem{BP08} J. Bourgain, N. Pavlovi\'{c},  Ill-posedness
of the Navier-Stokes equations in a critical space in 3D, J.
Funct. Anal. 255 (2008) 2233--2247.

\bibitem{CKN82}  L. Caffarelli, R. Kohn, L. Nirenberg,
Partial regularity of suitable weak solutions of the Navier-Stokes
equations, Comm. Pure Appl. Math. 35 (1982)
771--831.

\bibitem{DZC11} C. Deng, J. Zhao, S. Cui, Well-posedness
for the Navier-Stokes-Nernst-Planck-Poisson system in
Triebel-Lizorkin space and Besov space with negative indices, J.
Math. Anal. Appl. 377 (2011) 392--405.

\bibitem{DZ10} B. Dong, Z. Zhang, The BKM criterion for the 3D Navier-Stokes equations via two
velocity components, Nonlinear Analysis: Real World Applications 11 (2010) 2415--2421.

\bibitem{FK64} H. Fujita,  T. Kato, On the Navier-Stokes initial value problem, Arch. Rational Mech. Anal. 16 (1964)
260--315.

\bibitem{EN00} E. T. Enikov, B. J. Nelson,
Electrotransport and deformation model of ion exchange membrane
based actuators, Smart Structures and Materials  3978 (2000)
129--139.

\bibitem{ES05} E. T. Enikov, G. S. Seo,
Analysis of water and proton fluxes in ion-exchange polymer-metal
composite (IPMC) actuators subjected to large external potentials,
Sensors and Actuators 122 (2005) 264--272.

\bibitem {FG09} J. Fan, H. Gao, Uniqueness of weak solutions to a nonlinear hyperbolic system in electrohydrodynamics, Nonlinear Anal. 70 (2009) 2382--2386.

\bibitem {FLN13} J. Fan, F. Li, G. Nakamura, Regularity criteria for a mathematical model for the deformation of electrolyte droplets, Applied Mathematics Letters 26 (2013) 494--499.

\bibitem {FNZ12} J. Fan, G. Nakamura, Y. Zhou, On the Cauchy problem for a model of electro-kinetic fluid, Applied Mathematics Letters 25 (2012) 33--37.

\bibitem{J02} J.W. Jerome,
Analytical approaches to charge transport in a moving medium, Tran.
Theo. Stat. Phys. 31 (2002) 333--366.

\bibitem{J11} J.W. Jerome,
The steady boundary value problem for charged incompressible fluids:
PNP/Navier-Stokes systems, Nonlinear Anal. 74 (2011) 7486--7498.

\bibitem{JS09} J.W. Jerome, R. Sacco, Global weak solutions for an incompressible charged fluid with multi-scale couplings:
Initial-boundary-value problem, Nonlinear Anal. 71 (2009)
2487--2497.

\bibitem{JF13} L. Jin, J. Fan, The vanishing viscosity limit for a 3D model of electro-kinetic fluid in a bouned domain, Applied Mathematics Letters 26 (2013) 154--157.

\bibitem{K84} T. Kato, Strong $L^p$ solutions of the Navier-Stokes equations in $\mathbb{R}^m$ with applications to weak solutions,
Math. Z. 187 (1984) 471--480.


\bibitem{KP88} T. Kato,  G. Ponce,  Commutator estimates and the Euler and Navier-Stokes equations,
Comm. Pure. Appl. Math. 41 (1988)  891--907.

\bibitem{KT01} H. Koch,  D. Tataru, Well-posedness for the Navier-Stokes equations, Adv. Math. 157
(2001)  22--35.

\bibitem{KOT02} H. Kozono, T. Ogawa, Y. Taniuchi, The critical
Sobolev inequalities in Besov spaces and regularity criterion to
some semi-linear evolution equations, Math. Z. 242 (2002) 251--278.

\bibitem{KT00} H. Kozono, Y. Taniuchi, Bilinear estimates in $BMO$ and the Navier-Stokes equations,
Math. Z. 235 (2000) 173--194.

\bibitem{L34} J. Leray, Sur le mouvement  d'un  liquide  visqueux  emplissant l'espace, Acta
Math. 63 (1934) 193--248.

\bibitem{LCJS081} M. Longaretti, B. Chini, J.W. Jerome, R. Sacco, Electrochemical modeling
and characterization of voltage operated channels in
nano-bio-electronics, Sensor Letters 6 (2008) 49--56.

\bibitem{LCJS082} M. Longaretti, B. Chini, J.W. Jerome, R.
Sacco, Computational modeling and simulation of complex systems in
bio-electronics, J. Computational Electronics 7 (2008) 10--13.

\bibitem{LMCJS083} M. Longaretti, G. Marino, B. Chini, J.W. Jerome, R. Sacco, Computational
models in nano-bio-electronics: simulation of ionic transport in
voltage operated channels, J. Nanoscience and Nanotechnology 8
(2008) 3686--3694.

\bibitem{P59} G. Prodi, Un teorema di unicit\`{a} per le equazioni di
Navier-Stokes, Ann. Math. Pura Appl. 48 (1959) 173--182.

\bibitem{R90} I. Rubinstein, \textit{Electro-Diffusion of Ions}, SIAM Studies in
Applied Mathematics, SIAM, Philadelphia, 1990.

\bibitem{R09} R. J. Ryham,
Existence, uniqueness, regularity and long-term behavior for
dissipative systems modeling electrohydrodynamics,
arXiv:0910.4973v1.

\bibitem{S09} M. Schmuck,
Analysis of the Navier-Stokes-Nernst-Planck-Poisson system, Math.
Models Methods Appl. Sci. 19(6) (2009) 993--1015.

\bibitem{S62} J. Serrin, On the interior regularity of weak solutions of the Navier-Stokes equations, Arch. Rational
Mech. Anal. 9 (1962) 187--195.

\bibitem{SK04} M. Shahinpoor, K. J. Kim,
Ionic polymer-metal composites: III. Modeling and simulation as
biomimetic sensors, actuators, transducers, and artificial muscles,
Smart Mater. Struct. 13 (2004) 1362--1388.

\bibitem{T83}
H. Triebel, \textit{Theory of Function Spaces}, Monogr. Math. vol. 78, Birkh\"{a}user Verlag, Basel, 1983.

\bibitem{W15} B. Wang, Ill-posedness for the Navier-Stokes equations incritical Besov spaces $\dot{B}^{-1}_{\infty,q}$,  Adv. Math. 268 (2015) 350--372.

\bibitem{Y10} T. Yoneda, Ill-posedness of the 3D Navier-Stokes equations in a generalized Besov space near $BMO^{-1}$, J. Funct. Anal. 258(10) (2010) 3376--3387.

\bibitem{ZDC10} J. Zhao, C. Deng, S. Cui,
Global well-posedness of a dissipative system arising in
electrohydrodynamics in negative-order Besov spaces, J. Math.
Physics 51 (2010) 093101.


\bibitem{ZZL15} J. Zhao, T. Zhang, Q. Liu,  Global well-posedness for the dissipative system modeling electro-hydrodynamics with large vertical velocity component in critical Besov space, Discrete Contin. Dyn. Syst. Ser. A 35(1) (2015) 555--582.

\end{thebibliography}
\end{document}